\documentclass[journal,final]{IEEEtran}
%

\usepackage{hyperref}
\usepackage{array}
\usepackage{dblfloatfix}
\usepackage{url}
\usepackage{fixltx2e}
\usepackage{enumitem}
\usepackage[utf8]{inputenc}
\usepackage{bbm}
\usepackage[makeroom]{cancel}
\usepackage{soul}
\usepackage{upgreek}
\usepackage[colorinlistoftodos]{todonotes}
\usepackage{listings}
\usepackage{cuted}
\usepackage[noadjust]{cite}
\usepackage{amsthm}
\usepackage{amsmath,amssymb,amsfonts,nccmath,mathtools}

\usepackage{accents}	
\makeatletter	
\newcommand{\sbullet}{%
\hbox{\fontfamily{lmr}\fontsize{.4\dimexpr(\f@size pt)}{0}\selectfont\textbullet}}

\usepackage{graphicx}
\usepackage{textcomp}
\usepackage{xcolor}
\usepackage{color}
\usepackage{accents}
\usepackage{tikz}
\usetikzlibrary{positioning}
\usepackage{float,layouts}
\usepackage[justification=centering]{caption}
\usepackage{cuted} 
\usepackage{ulem}

\interdisplaylinepenalty=2500

\usepackage{algorithm,algorithmic}

\usepackage[linesnumbered,ruled,vlined,algo2e]{algorithm2e}

\SetCommentSty{mycommfont}
\let\oldnl\nl
\newcommand{\nonl}{\renewcommand{\nl}{\let\nl\oldnl}}
\SetKwInput{KwInput}{Input}                
\SetKwInput{KwOutput}{Output}              
\SetKwInOut{Parameter}{Parameter}
\SetKwRepeat{Do}{do}{while}%

\def\BibTeX{{\rm B\kern-.05em{\sc i\kern-.025em b}\kern-.08em
		T\kern-.1667em\lower.7ex\hbox{E}\kern-.125emX}}

\usepackage{mathrsfs}
\usepackage{epstopdf}
\usepackage{amsfonts}
\usepackage{epsf,epsfig,verbatim,multicol,multirow}  
\usepackage{psfrag,bm,xspace}
\usepackage{hhline}
\usepackage{physics}
\newtheorem{thm}{Theorem}
\newtheorem{prop}{Proposition}
\newtheorem{dfn}{Definition}
\newtheorem{rem}{Remark}
\newtheorem{lem}[thm]{Lemma}
\newtheorem{cor}{Corollary}[thm]

\newtheorem{assumption}{Assumption}



\newcounter{relctr} 
\everydisplay\expandafter{\the\everydisplay\setcounter{relctr}{0}} 

\newcommand\labelrel[2]{%
  \begingroup
    \refstepcounter{relctr}%
    \stackrel{\textnormal{(\alph{relctr})}}{\mathstrut{#1}}%
    \originallabel{#2}%
  \endgroup
}
\AtBeginDocument{\let\originallabel\label} 

\allowdisplaybreaks 
\global\long\def\11{\mathbbm{1}}

\newcommand\numberthis{\addtocounter{equation}{1}\tag{\theequation}}
\newcommand{\ra}{\rightarrow}

\newcommand{\mcl}{\mathcal}
\newcommand{\mbb}{\mathbb}
\newcommand{\mbf}{\mathbf}
\newcommand{\eps}{\epsilon}
\newcommand{\ov}{\overline}
\newcommand{\udl}{\underline}
\newcommand{\uda}{\underaccent}
\newcommand{\ulbar}[1]{ \uda{\bar}{\bar{#1}} }

\def \defeq{\overset{\Delta}{=}}

\def \wh{\widehat}

\def \pr{\mbb{P}}

\def \l{\left}
\def \r{\right}


\newcommand{\macpomdp}{\text{MA-C-POMDP}}

\newcommand{\dotp}[2]{\langle #1, #2 \rangle}
\newcommand{\Stt}[1]{S_{#1}}
\newcommand{\stt}[1]{s_{#1}}
\newcommand{\sspace}{\mcl{S}}
\newcommand{\Otn}[2]{O_{#1}^{(#2)}}
\newcommand{\Ot}[1]{O_{#1}}

\newcommand{\ot}[1]{o_{#1}}

\newcommand{\ospace}{\mcl{O}}
\newcommand{\onspace}[1]{ \ospace^{(#1)} }
\newcommand{\Atn}[2]{A_{#1}^{(#2)}}
\newcommand{\At}[1]{A_{#1}}

\newcommand{\atn}[2]{a_{#1}^{(#2)}}
\newcommand{\at}[1]{a_{#1}}
\newcommand{\an}[1]{a^{(#1)}}
\newcommand{\aspace}{\mcl{A}}
\newcommand{\anspace}[1]{\aspace^{(#1)}}
\newcommand{\borel}[1]{\mcl{B}\l(#1\r)}

\newcommand{\m}[1]{\mcl{M}_1\l( #1 \r)}
\newcommand{\metric}[1]{d_{#1}}

\newcommand{\Hst}[1]{H_{#1}}
\newcommand{\Hstn}[2]{H_{#1}^{(#2)}}
\newcommand{\hst}[2]{#1_{#2}}

\newcommand{\hstn}[3]{#1_{#2}^{(#3)}}
\newcommand{\hsspace}{\mcl{H}}
\newcommand{\hstspace}[1]{\hsspace_{#1}}
\newcommand{\hsnspace}[1]{\hsspace^{(#1)}}
\newcommand{\hstnspace}[2]{\hsspace_{#1}^{(#2)}}
\newcommand{\policy}{u}
\newcommand{\uspace}{\mcl{U}}
\newcommand{\uspacen}[1]{\uspace^{(#1)}}
\newcommand{\mun}[1]{\mu^{(#1)}}
\newcommand{\uspacemix}{\uspace_{\mathrm{mixed}}}

\newcommand{\ut}[2]{#1_{#2}}
\newcommand{\un}[2]{#1^{(#2)}}
\newcommand{\utn}[3]{#1_{#2}^{(#3)}}
\newcommand{\prup}[2]{ \pr^{\l(#1\r)}_{#2} }
\newcommand{\E}[2]{ \mbb{E}^{\l(#1\r)}_{#2}}
\newcommand{\cCost}{c\l( \Stt{t}, \At{t} \r)}

\newcommand{\dCost}{ d\l( \Stt{t}, \At{t} \r) }

\newcommand{\constraintv}{\breve{D}}
\newcommand{\lag}[2]{L^{(P_1, \alpha)} \l(#1, #2 \r)}
\newcommand{\lags}[2]{L \l(#1, #2 \r)}
\newcommand{\lagmix}[2]{ \wh{L}^{(P_1, \alpha)}\l(#1, #2 \r) }
\newcommand{\lagsmix}[2]{\wh{L}\l(#1, #2 \r) }

\newcommand{\fullccost}[1]{C^{(P_1, \alpha)}\l(#1\r)}
\newcommand{\fullccostmix}[1]{\wh{C}^{(P_1, \alpha)}\l(#1\r)}
\newcommand{\fullccosts}[1]{C\l(#1\r)}
\newcommand{\fullccostsmix}[1]{\wh{C}\l(#1\r)}

\newcommand{\fulldkcostsmix}[2]{\wh{D}_{#1}\l(#2 \r)}

\newcommand{\fulldcost}[1]{D^{(P_1, \alpha)}\l( #1 \r)}
\newcommand{\fulldcostmix}[1]{\wh{D}^{(P_1, \alpha)}\l( #1 \r)}
\newcommand{\fulldcosts}[1]{D\l( #1 \r)}
\newcommand{\fulldcostsmix}[1]{\wh{D}\l( #1 \r)}

\newcommand{\optcost}{\udl{C}^{(P_1, \alpha)}}
\newcommand{\optcosts}{\udl{C}}

\newcommand{\infsup}[2]{\inf\limits_{#1}\sup\limits_{#2}}
\newcommand{\supinf}[2]{\sup\limits_{#1}\inf\limits_{#2}}

\newcommand{\xuspace}{\mcl{X}_{\mcl{U}}}
\newcommand{\xuspacen}[1]{\mcl{X}_{\mcl{U}^{(#1)}}}

\newcommand{\useq}[2]{\hspace{0pt}^{#1}#2}
\newcommand{\pruphst}[3]{p_{P_1} \l(#1, #2, #3\r)}
\newcommand{\pruphsts}[3]{p\l(#1, #2, #3\r)}
\newcommand{\zuphst}[3]{W_{P_1}\l(#1, #2, #3\r)}
\newcommand{\zuphsts}[3]{W\l(#1, #2, #3\r)}



\graphicspath{{./figures/}}

\usepackage[T1]{fontenc}


%

%
\usepackage{cite}
\usepackage{hyperref}
\usepackage{amsmath}
%
\interdisplaylinepenalty=2500

%


%
\usepackage{algorithm,algorithmic}

%
\usepackage{array}


\ifCLASSOPTIONcompsoc
  \usepackage[caption=false,font=normalsize,labelfont=sf,textfont=sf]{subfig}
\else
  \usepackage[caption=false,font=footnotesize]{subfig}
\fi
\usepackage{dblfloatfix}
\usepackage{url}


\hyphenation{op-tical net-works semi-conduc-tor}

\allowdisplaybreaks

\begin{document}
%
\title{
A Strong Duality Result for Constrained POMDPs with Multiple Cooperative Agents
}
%
%
%
%
\author{Nouman~Khan,~\IEEEmembership{Member,~IEEE,}
       and~Vijay~Subramanian,~\IEEEmembership{Senior Member,~IEEE}
}

\maketitle
\begin{abstract}
The work studies the problem of decentralized constrained POMDPs in a team-setting where multiple non-strategic agents have asymmetric information. Using an extension of Sion's Minimax theorem for functions with positive infinity and results on weak-convergence of measures, strong duality is established for the setting of infinite-horizon expected total discounted costs when the observations lie in a countable space, the actions are chosen from a finite space, the immediate constraint costs are bounded, and the immediate objective cost is bounded from below. 
\end{abstract}

\begin{IEEEkeywords}
Planning and Learning in Multi-Agent POMDP with Constraints, Strong Duality, Lower Semi-continuity, Minimax Theorem, Tychonoff's theorem.
\end{IEEEkeywords}

%
\IEEEpeerreviewmaketitle

\section{Introduction}\label{sec:introduction}
\IEEEPARstart{S}{ingle-Agent} 
Markov Decision Processes (SA-MDPs) \cite{bellman57} and Single-Agent Partially Observable Markov Decision Processes (SA-POMDPs) \cite{astrom65} have long served as the basic building-blocks in the study of sequential decision-making. An SA-MDP is an abstraction in which an agent sequentially interacts with a fully-observable Markovian environment to solve a multi-period optimization problem; in contrast, in SA-POMDP, the agent only gets to observe a noisy or incomplete version of the environment. 
In 1957, Bellman proposed dynamic-programming as an approach to solve SA-MDPs \cite{bellman57,howard:dp}. This combined with the characterization of SA-POMDP into an equivalent SA-MDP \cite{smallwood1973optimal, sondik1978optimal, kaelbing199899} (in which the agent maintains a belief about the environment's true state) made it possible to extend dynamic-programming results to SA-POMDPs. 
\emph{Reinforcement learning}~\cite{sutton98} based algorithmic frameworks 
use data-driven dynamic-programming approaches to solve such single-agent sequential decision-making problems when the environment is unknown.

In many engineering systems, there are multiple decision-makers that collectively solve a sequential decision-making problem but with safety constraints: e.g., a team of robots performing a joint task, a fleet of automated cars navigating a city, multiple traffic-light controllers in a city, etc. Bandwidth constrained communications and communication delays in such systems lead to a decentralized team problem with information asymmetry. In this work, we study a fairly general abstraction of such systems, 
namely 
that of a cooperative multi-agent constrained POMDP, hereon referred to as \macpomdp. The special cases of \macpomdp 
when there are no constraints, when there is only one agent, or when the environment is fully observable to each agent, are referred to as MA-POMDP, SA-C-POMDP, and MA-C-MDP, respectively. The relationships among such models are shown in Figure \ref{fig:model_relationships}.

\begin{rem} 
MA-C-POMDP, is an extension of the decentralized POMDP (Dec-POMDP) to the setting of constrained decision-making, i.e., Decentralized Constrained POMDP (Dec-C-POMDP). Importantly, in this paper, MA-POMDP is equivalent to Dec-POMDP and MA-C-MDP to Dec-C-MDP. In Dec-POMDP or Dec-C-POMDP, agents are assumed to act based on their individual information without any communication with each other. 

For a good introduction to Dec-POMDPs, please see \cite{oliehoek16}. We inform beforehand that \cite{oliehoek16} considers MA-POMDP as a special case of Dec-POMDP wherein agents communicate all their information with each other. We have decided to deviate from this categorization because the term multi-agent itself does not specify whether agents engage in communication and/or the degree to which they do so.\footnote{Settings that involve communication can be incorporated in our MA-C-POMDP formulation through actions and observations of the agents (see~\cite{oliehoek16}).}
\end{rem}

\subsection{Related Work}
\subsubsection{Single-Agent Settings}
Prior work on planning and learning under constraints has primarily focused on single-agent constrained MDP (SA-C-MDP) where unlike in SA-MDPs, the agent solves a constrained optimization problem. For this setup, a number of fundamental results from the planning perspective have been derived -- for instance, \cite{altman94, altman96, feinberg94, feinberg95, feinberg96, feinberg2000, feinberg2020}; see \cite{altman-constrainedMDP} for details of the convex-analytic approach for SA-C-MDPs. 
These aforementioned results have led to the development of many algorithms in the learning setting: see \cite{borkar2005AnAA, bhatnagar2010AnAA, bhatnagar2012AnOA, Wei2022APM, wei22a-pmlr-v151, bura2021, vaswani2022}. Unlike SA-C-MDPs, rigorous results for SA-C-POMDPs are limited; few works include \cite{dongho2011, jongmin18, undurti2010, jamgochian2022}.

\subsubsection{Multi-Agent Settings}
Challenges arising from the combination of \emph{partial observability} of the environment and \emph{information-asymmetry}\footnote{
Mismatch in the information of the agents.} have led to difficulties in developing general solutions to MA-POMDPs: e.g., solving a finite-horizon MA-POMDP with more than two agents is known to be NEXP-complete~\cite{bernstein00}. Nevertheless, conceptual approaches exist to establish solution methodologies and structural properties in (finite-horizon) MA-POMDPs namely: i) the person-by-person approach~\cite{witsenhausen1979structure}; ii) the designer's approach~\cite{witsenhausen1973standard}; and iii) the \emph{common-information (CI) approach}~\cite{nayyar13,nayyar14}.
Using a fictitious \emph{coordinator} that only uses the common information to take actions, the CI approach allows for the transformation of the problem to a SA-POMDP which can be used to solve for an optimal control. 
The CI approach has also led to the development of a multi-agent reinforcement learning (MARL) framework \cite{hsu22} where agents learn good compressions of common and private information that can suffice for approximate optimality. On the empirical front, worth-mentioning works include 
\cite{gupta17,rashid2020-2}. 
Finally, as far as we know, work on 
MA-C-POMDPs 
is non-existent.

\subsection{Contribution}
For MA-C-POMDPs, the technical challenges increase even more from those of MA-POMDPs because restriction of the search space to deterministic policy-profiles is no longer an option\footnote{Restricting to deterministic policies can be sub-optimal in SA-C-MDPs and SA-C-POMDPs: see \cite{altman-constrainedMDP} and \cite{dongho2011}.}. Therefore, the coordinator in the equivalent SA-C-POMDP has an uncountable prescription space, which leads to an uncountable state-space in its equivalent SA-C-MDP. This is an issue because most fundamental results in the theory of SA-C-MDPs (largely based on occupation-measures) rely heavily on the state-space being at most countably-infinite; see \cite{altman-constrainedMDP}. Due to these reasons, the study of MA-C-POMDPs 
calls for a new methodology---one which avoids this transformation and directly studies the decentralized problem. Our work takes the first steps in this direction and presents a rigorous approach for MA-C-POMDPs 
which is based on structural characterization of the set of behavioral policies and their performance measures, and using measure theoretic results. The main result in this paper, namely Theorem \ref{thm:strongduality}, establishes strong duality and existence of a saddle-point for MA-C-POMDPs, thus providing a firm theoretical basis for (future) development of primal-dual type planning and learning algorithms.

\subsection{Organization}
The rest of the paper is organized as follows. Mathematical model of (cooperative) MA-C-POMDP is introduced in Section \ref{sec:problem}. The optimization problem is formulated in Section \ref{sec:optimization_problem}. Results on strong duality and existence of a saddle point are then derived in Section \ref{sec:strongduality}. 
Finally, concluding remarks are given in Section \ref{sec:conclusion}.

\subsection{Notation}
Before we present the model, we highlight the key notations in this paper.
\begin{itemize}[leftmargin=0pt, 
itemindent=10pt,
labelwidth=0pt, 
labelsep=5pt, 
]
\item The sets of integers and positive integers are respectively denoted by $\mbb{Z}$ and $\mbb{N}$. For integers $a$ and $b$, $[a,b]_{\mbb{Z}}$ represents the set $\{a, a+1, \dots, b\}$ if $a\le b$ and $\emptyset$ otherwise. The notations [a] and $[a,\infty]_{\mbb{Z}}$ are used as shorthand for $[1, a]_{\mbb{Z}}$ and $\{a, a+1, \dots \}$, respectively.
\item For integers $a \le b$ and $c \le d$, and a quantity of interest $q$, $\un{q}{a:b}$ is a shorthand for the vector $\l( \un{q}{a}, \un{q}{a+1}, \dots, \un{q}{b} \r)$ while $\ut{q}{c:d}$ is a shorthand for the vector $\l( \ut{q}{c}, \ut{q}{c+1}, \dots, \ut{q}{d} \r)$. The combined notation $\utn{q}{a:b}{c:d}$ is a shorthand for the vector $(\utn{q}{i}{j}: i \in [a,b]_{\mbb{Z}}, j \in [c, d]_{\mbb{Z}})$. The infinite tuples $\l( \un{q}{a}, \un{q}{a+1}, \dots, \r)$ and $\l( \ut{q}{c}, \ut{q}{c+1}, \dots, \r)$ are respectively denoted by $ \un{q}{a:\infty}$ and $\ut{q}{c:\infty}$.
\item For two real-valued vectors $v_1$ and $v_2$, the inequalities $v_1 \le v_2$ and $v_1 < v_2$ are meant to be element-wise inequalities.
\item Probability and expectation operators are denoted by $\pr$ and $\mbb{E}$, respectively. Random variables are denoted by upper-case letters and their realizations by the corresponding lower-case letters.  At times, we also use the shorthand  $\mbb{E}\l[ \cdot | x \r] \defeq \mbb{E}\l[ \cdot | X = x \r]$ and $\pr\l( y | x \r) \defeq \pr\l( Y = y | X = x \r)$ for conditional quantities.
\item Topological spaces are denoted by upper-case calligraphic letters. For a topological-space $\mcl{W}$, $\borel{\mcl{W}}$ denotes the Borel $\sigma$-algebra, measurability is determined with respect to $\borel{\mcl{W}}$, and $\m{\mcl{W}}$ denotes the set of all probability measures on $\borel{\mcl{W}}$ endowed with the topology of weak convergence. Also, unless stated otherwise, ``measure'' means a non-negative measure.
\item Unless otherwise stated, if a set $\mcl{W}$ is countable, as a topological space it will be assumed to have the discrete topology. Therefore, the corresponding Borel $\sigma$-algebra $\borel{\mcl{W}}$ will be the power-set $2^{\mcl{W}}$.
\item Unless stated otherwise, the product of a collection of topological spaces will be assumed to have the product topology.
\item The notation in Appendices \ref{sec:appendix:helpful_facts} and \ref{sec:appendix:minimax} is exclusive and should be read independent of the rest of the manuscript.
\end{itemize}
\begin{figure}[t]
    \centering
    \includegraphics[width=0.9\linewidth]{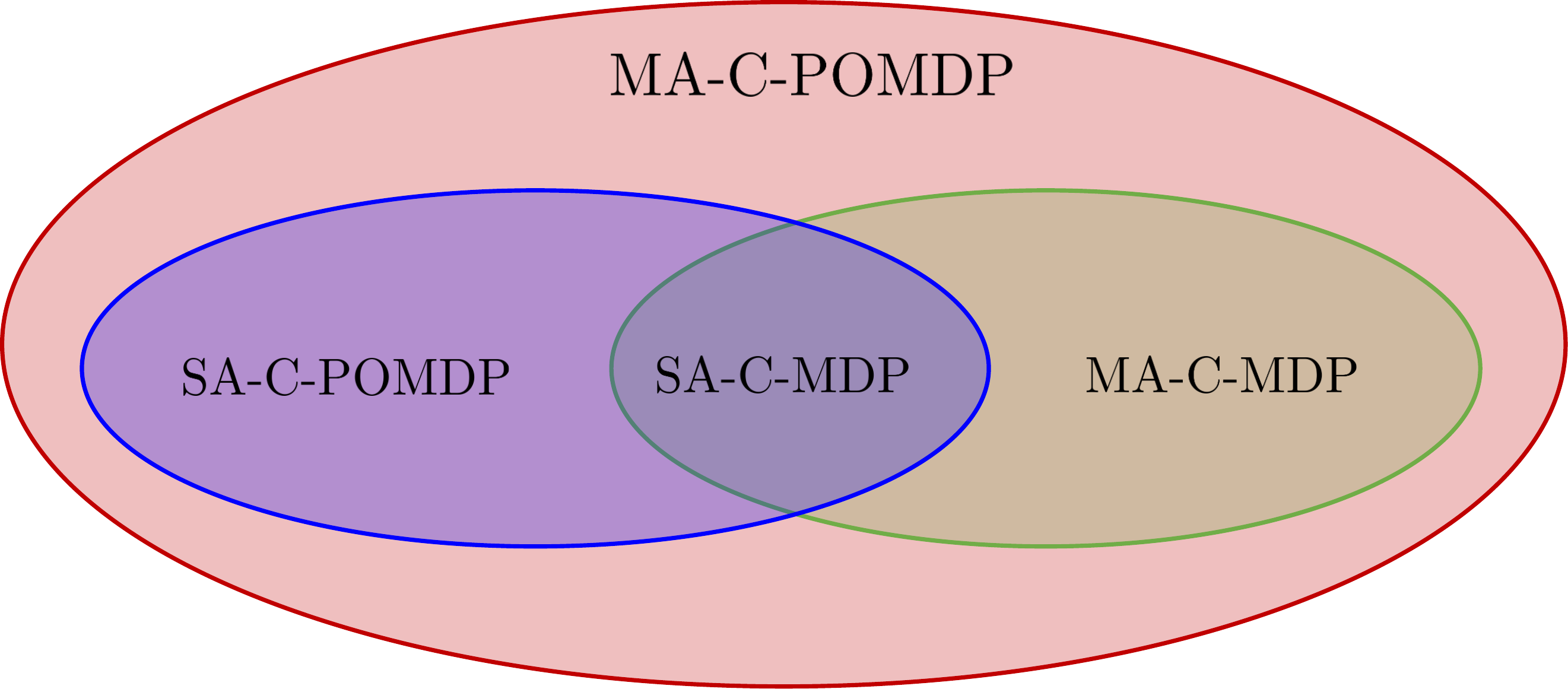}
    \caption{Relationships between Models of Cooperative Sequential Decision-Making under Constraints.}
    \label{fig:model_relationships}
\end{figure}
\section{Model}\label{sec:problem}
Let $ \l( N, \sspace, \ospace, \aspace, \mcl{P}_{tr}, \l( c, d\r), P_{1}, \mcl{U}, \alpha \r)$
denote a (cooperative) 
{MA}-C-POMDP 
with $N$ 
agents, state space $\sspace$, joint-observation space $\ospace$, joint-action space $\aspace$, transition-law $\mcl{P}_{tr}$, immediate-cost functions $c$ and $d$, (fixed) initial distribution $P_1$, space of decentralized policy-profiles $\mcl{U}$, and discount factor $\alpha\in\l(0, 1\r)$. The decision problem (to be detailed later on) has the following attributes and notation.
\begin{itemize}
[leftmargin=0pt, 
itemindent=10pt,
labelwidth=0pt, 
labelsep=5pt, 
]
\item \textbf{State Process}: 
The state-space $\sspace$ is some topological space with a Borel $\sigma$-algebra $\borel{\sspace}$. The state-process is denoted by $\l\{\Stt{t} \r\}_{t=1}^{\infty}$.
\item \textbf{Joint-Observation Process}: The joint-observation space $\ospace$ is a countable discrete space of the form $ \ospace = \prod_{n=0}^{N} \onspace{n}$, where $\onspace{0}$ denotes the common observation space of all agents and $\onspace{n}$ denotes the private observation space of agent $n \in [N]$. The joint-observation process is denoted by $\l\{ \Ot{t} \r\}_{t=1}^{\infty}$ where $\Ot{t} = \Otn{t}{0:N}$ and is such that at time $t$, agent $n\in [N]$ observes $\Otn{t}{0}$ and $\Otn{t}{n}$ only.

\item \textbf{Joint-Action Process}: The joint-action space $\aspace $ is a finite discrete space of the form $ \aspace  = \prod_{n=1}^{N} \anspace{n} $, where $\anspace{n}$ denotes the action space of agent $n\in [N]$. The joint-action process is denoted by $\l\{ \At{t} \r\}_{t=1}^{\infty}$ where $\At{t} = \Atn{t}{1:N}$ and $\Atn{t}{n}$ denotes the action of agent $n$ at time $t$.\footnote{The results in this work also hold if for every $( \hstn{h}{t}{0}, \hstn{h}{t}{n}) \in \hstnspace{t}{0}\times\hstnspace{t}{n}$, agent $n$ is allowed to take action from a separate finite discrete space $\anspace{n}(\hstn{h}{t}{0}, \hstn{h}{t}{n})$.} Since all $\anspace{n}$'s and $\aspace$ are finite, they are all compact metric spaces.\footnote{Hence, also complete and separable.} 

\item \textbf{Transition-law}: At time $t \in \mbb{N}$, given the current state $\Stt{t}$ and current joint-action $\At{t}$, the next state $\Stt{t+1}$ and the next joint-observation $\Ot{t+1}$ are determined in a time-homogeneous manner, independent of all previous states, all previous and current joint-observations, and all previous joint-actions. 
The transition-law is given by 
\begin{align*}
   \hspace{-0.10in} \mcl{P}_{tr} \defeq \l\{ P_{saBo}: s \in \sspace, a\in\aspace, B \in  \borel{\sspace}, o \in \ospace \r\},\numberthis\label{eq:transitionlaw}
\end{align*}
where for all $t \in \mbb{N}$, 
\begin{align}
&\pr\l( \Stt{t+1} \in B , \Ot{t+1} = o | \Stt{1:t-1}  = \stt{1:t-1}, \r.\notag\\
&\hspace{12.5pt} \l. \Ot{1:t} = \ot{1:t}, \At{1:t-1} = \at{1:t-1}, 
\Stt{t} = s, \At{t} = a \r)\notag\\
&\hspace{10pt}
= \pr\l( \Stt{t+1} \in B, \Ot{t+1} = o | \Stt{t} = s, \At{t} = a \r) \label{eq:psabo}\\
&\hspace{10pt} 
\defeq P_{saBo}. \notag
\end{align}

\item \textbf{Immediate-costs}: 
The immediate cost $c : \sspace \times \aspace \mapsto \mbb{R}$ is a real-valued function whose expected discounted aggregate (to be defined later) we would like to minimize. On the other hand, the immediate cost $d : \sspace \times \aspace \mapsto \mbb{R}^K$ is $\mbb{R}^K$-valued function whose expected discounted aggregate we would like to keep within a specified threshold. For these reasons, we call $c$ and $d$ as the immediate objective and constraint costs respectively. We shall make use of the following assumption on immediate-costs in Theorem \ref{thm:strongduality}.

\begin{assumption}\label{assmp:boundedcosts}
The immediate objective cost is bounded from below and the immediate constraint costs are bounded, i.e., there exist $\udl{c} \in \mbb{R}$ and $\udl{d}, \ov{d} \in \mbb{R}^K$ such that
\begin{align*}
\udl{c} \le c(\cdot, \cdot) \text{ and } 
\udl{d} \le d(\cdot, \cdot) \le \ov{d}. \numberthis\label{eq:immediatecostsbounded}
\end{align*}
Let $ \ulbar{d} = \| \udl{d} \|_{\infty} \vee  \| \ov{d} \|_{\infty}$ so that $\| d(\cdot, \cdot) \|_{\infty} \le \ulbar{d} < \infty$.
\end{assumption}

\item \textbf{Initial Distribution}: $P_1$ is a (fixed) probability measure for the initial state and initial joint-observation, i.e., $P_1 \in \m{\sspace\times \ospace}$ and
\begin{align}
\begin{split}\label{eq:initialdistribution}
P_1\l( B, o \r) &\defeq \pr\l( \Stt{1} \in B, \Ot{1} = o \r).
\end{split}
\end{align}   

\item \textbf{Space of Policy-Profiles}: At time $t \in \mbb{N}$, the \emph{common history} of all agents is defined as all the common observations received thus far, i.e.,  $\Hstn{t}{0} \defeq \l( \Otn{1:t}{0}  \r)$. Similarly, the \emph{private history} of agent $n \in [N]$ at time $t$ is defined as all observations received and all the actions taken by the agent thus far (except for those that are part of the common information), i.e., 
\begin{align}
    \begin{split}\label{eq:Hstn}
        \Hstn{1}{n} &\defeq \Otn{1}{n} \setminus \Otn{1}{0}, \text{ and}\\
        \Hstn{t}{n} &\defeq \l( \Hstn{t-1}{n}, (\Atn{t-1}{n}, \Otn{t}{n})\setminus \Otn{t}{0} \r) \ \forall t \in [2,\infty]_{\mbb{Z}}.
    \end{split}
\end{align}
Finally, the \emph{joint history} at time $t$ is defined as the tuple of the common history and all the private histories at time $t$, i.e., $\Hst{t}  \defeq \Hstn{t}{0:n}$. 

\hspace{5pt} With the above setup, we define a (decentralized) behavioral policy-profile $u $ as a tuple $\un{u}{1:N} \in \uspace \defeq \prod_{n=1}^{N} \uspace^{(n)} $ where $\un{u}{n}$ denotes some behavioral policy used by agent $n$, i.e., $\un{u}{n}$ itself is a tuple of the form $ \utn{u}{1:\infty}{n}$ where $\utn{u}{t}{n}$ maps $\hstnspace{t}{0} \times \hstnspace{t}{n}$ to $\m{\anspace{n}}$, and where agent $n$ uses the distribution $\utn{u}{t}{n} ( \Hstn{t}{0}, \Hstn{t}{n} )$ to choose its action $\Atn{t}{n}$. We pause to emphasize that in a (decentralized) behavioral policy, at any time $t$, each agent randomizes over its action-set independently of all other agents (\textit{no common randomness is used}). Thus, given a joint-history $\hst{h}{t} \in \hstspace{t}$ at time $t$, the probability that joint-action $\at{t} \in \aspace$ is taken is given by
\begin{align*}
\ut{u}{t}\l(\at{t}|\hst{h}{t}\r) 
&\defeq \prod_{n=1}^{N} \utn{u}{t}{n}\l( \hstn{h}{t}{0}, \hstn{h}{t}{n}  \r) \l( \atn{t}{n} \r)\\
&=\prod_{n=1}^{N} \utn{u}{t}{n}\l( \atn{t}{n} \big| \hstn{h}{t}{0}, \hstn{h}{t}{n}  \r).\numberthis\label{eq:uah}
\end{align*}
\begin{rem}
With Assumption 1, the conditional expectations $\mbb{E}_{P_1} \l[ \cCost \mid \Hst{t} = \hst{h}{t}, \At{t} = \at{t} \r] $ and $\mbb{E}_{P_1} \l[ \dCost \mid \Hst{t} = \hst{h}{t}, \At{t} = \at{t} \r] $ exist, are unique, and are bounded from below. Furthermore, the latter are element-wise finite.
\end{rem}

\item \textbf{Decision Process}: Let $\prup{u}{P_1}$ be the probability measure corresponding to policy-profile $u\in\uspace$ and initial-distribution $P_1$, and let $\E{u}{P_1}$ denote the corresponding expectation operator.\footnote{The existence and uniqueness of $\prup{u}{P_1}$ can be ensured by an adaptation of the Ionesca-Tulcea theorem \cite{tulcea49}.} We define \textit{infinite-horizon expected total discounted costs} $C:\uspace\ra \mbb{R} \cup \{ \infty \}$ and $D:\uspace \ra \mbb{R}^K$ as
\begin{align*}
\fullccosts{u} = \fullccost{u} &\defeq  \E{u}{P_1} \l[ \sum_{t=1}^{\infty} \alpha^{t-1} \cCost \r],
\numberthis\label{eq:C}\\
\text{and}\ 
\fulldcosts{u} = \fulldcost{u} &\defeq  \E{u}{P_1}\l[ \sum_{t=1}^{\infty} \alpha^{t-1} \dCost \r].\numberthis\label{eq:D}
\end{align*}
\begin{rem}\label{rem:real_valued_aggregatecosts}
Assumption \ref{assmp:boundedcosts} ensures that $\fullccosts{u}\in \mbb{R} \cup \{ \infty \}$, and $\fulldcosts{u} \in \mbb{R}^K$ with (absolute) element-wise bound $\ulbar{d}/(1-\alpha)$.
\end{rem}
The decision process proceeds as follows: \textit{i}) At time $t \in \mbb{N}$, the current state $\Stt{t}$ and observations $\Ot{t}$ are generated; \textit{ii}) Each agent $n \in [N]$ chooses an action $\an{n} \in \anspace{n}$ based on $\Hstn{t}{0}, \Hstn{t}{n}$; \textit{iii}) the immediate-costs $c\l( \Stt{t}, \At{t} \r), d\l(\Stt{t}, \At{t}\r)$ are incurred; 
\textit{iv}) The system moves to the next state and observations according to the transition-law $\mcl{P}_{tr}$. 
\end{itemize}
\section{Optimization Problem}\label{sec:optimization_problem}
To formulate the MA-C-POMDP optimization problem, we first need to give a suitable topology to the space of behavioral policy-profiles, in particular, one in which it is compact and convex.\footnote{Convexity is a set property rather than a topological property. In the rest of the paper, by a ``convex topological space'', we mean convexity of the set on which the topology is defined.} To this end, 
we use the finiteness of the action space $\anspace{n} $ and the countability of the joint-observation space $\ospace$ to associate $\uspace$ with a product of compact sets that are parameterized by (countable number of) all possible histories. Tychonoff's theorem 
(see Proposition \ref{prop:tychonoff} )
then helps achieve compactness under the product topology. (Convexity comes trivially). Now, we make this idea precise. For $t\in \mbb{N}$ and $n\in[0,N]_{\mbb{Z}}$, let $\hstnspace{t}{n}$ denote the set of all possible realizations of $\Hstn{t}{n}$. Then, by countability of observation and action spaces, the sets
\begin{align}
\begin{split}\label{eq:hthnandh}
\hstspace{t} &\defeq \prod_{n=0}^{N} \hstnspace{t}{n},\\
\hsnspace{n} &\defeq \bigcup_{t= 1}^{\infty} \hstnspace{t}{0} \times \hstnspace{t}{n}, \text{ and }\\
\hsspace &\defeq \bigcup_{t=1}^{\infty} \hstspace{t},
\end{split}
\end{align}
are countable. Here, $\hstspace{t}$ is the set of all possible joint-histories at time $t$, $\hsnspace{n}$ is the set of all possible histories of agent $n$, and $\hsspace$ is the set of all possible joint-histories. With this in mind, one observes that $\uspace$ is in one-to-one correspondence with the set $\xuspace \defeq \prod_{n=1}^{N} \xuspacen{n}$, where
\begin{align*}
\xuspacen{n} \defeq \prod_{h \in \hsnspace{n}} \m{\anspace{n}; h},\numberthis\label{eq:xuspace}
\end{align*}
and $\m{\anspace{n}; h}$\footnote{$M_1(\cdot)$ denotes the set of all probability measures on $\cdot$.} is a copy of $\m{\anspace{n}}$ dedicated for agent-$n$'s history $h$. For example, a given policy $u$ would correspond to a point $x\in \xuspace$ such that $x_{n, \l( \hstn{h}{t}{0}, \hstn{h}{t}{n}\r) } = \utn{u}{t}{n} \l( \cdot | \hstn{h}{t}{0}, \hstn{h}{t}{n} \r) $, and similarly, vice versa. 

\hspace{5pt} Since $\anspace{n}$ is a complete separable (compact) metric space, by Prokhorov's Theorem 
(see Proposition \ref{prop:prokhorov}),
each $\m{\anspace{n}; h}$ is a compact (and convex\footnote{Convexity of $\m{\anspace{n}}$ is trivial.}) metric space (with the topology of weak-convergence). Therefore, endowing $\xuspacen{n}$ and $\xuspace$ with the product topology makes each a compact (and convex) metric space via Tychonoff's theorem (see 
Proposition \ref{prop:tychonoff}), 
which is also metrizable 
via Proposition \ref{prop:metrizability}. Given the one-to-one correspondence, \textbf{from now onward, we assume that $\uspacen{n}$ and $\uspace$ have the same topology as that of $\xuspacen{n}$ and $\xuspace$ respectively}. Henceforth, we will consider $C$ and $D_k$'s as functions on topological spaces. Furthermore, since $\uspace$ has been shown to be a compact metric space (hence, also complete and separable), we can also define $\borel{\uspace} = \otimes_{n=1}^{N} \borel{\uspacen{n}}$\footnote{For separable metric spaces $\mcl{W}_1, \mcl{W}_2, \ldots$, $\borel{\mcl{W}_1 \times \mcl{W}_2 \times \ldots } = \borel{\mcl{W}_1} \otimes \borel{\mcl{W}_2} \otimes \ldots$. See \cite{kallenberg2002foundations}[Lemma 1.2].}, and $\m{\uspace}$, where $\m{\uspace}$ is compact (and convex) metrizable space by Prokhorov's theorem 
(see Proposition \ref{prop:prokhorov}).

It will be helpful to work with mixtures of behavioral policy-profiles -- wherein the team first uses a measure $\mu \in M_1(\uspace) $ to choose its policy-profile $\policy \in \uspace$ and then proceeds with it from time 1 onward. Under this setup, the policy-profile chosen collectively by the agents becomes a random object, and we extend the definitions of $C$ and $D$ to $\wh{C} : \m{\uspace} \ra \mbb{R} \cup \{\infty\}$ and $\wh{D}: \m{\uspace} \ra \mbb{R}^K$ as follows:
\begin{align}
\begin{split}\label{eq:lagrangianmix}
    \fullccostsmix{\mu} &= \fullccostmix{\mu} \defeq \E{U\sim \mu}{} \l[ C(U) \r], \text{and} \\
    \wh{D} (\mu) &= \fulldcostmix{\mu} \defeq \E{U\sim \mu}{} \l[ D(U) \r].
    \end{split}
\end{align}
The goal of the agents is to work cooperatively to solve the following constrained optimization problem.
\begin{align}
\begin{split}
 &\text{minimize } \fullccostsmix{\mu} \notag\\
 &\text{subject to } \mu \in \m{\uspace} \text{ and } \fulldcostsmix{\mu} \le \constraintv.
\end{split}
 \Bigg\}\tag{\textit{MA-C-POMDP}} \label{eq:macpomdp}
\end{align}
Here, $\constraintv$ is a fixed $K$-dimensional real-valued vector. We refer to the solution of \eqref{eq:macpomdp} as its \emph{optimal value} and denote it by $\optcosts = \optcost$. In particular, if the set of feasible mixtures is empty, we set $\optcosts$ to $\infty$, and, with slight abuse of terminology, we will consider any mixture in $\m{\uspace}$ to be optimal.

\hspace{5pt} The following assumption about feasibility of \eqref{eq:macpomdp} will be used in one of the parts of Theorem \ref{thm:strongduality}.
\begin{assumption}[Slater's Condition]\label{assmp:slatercondition}
There exists a mixture $\mu \in \m{\uspace}$ and $\zeta > 0$ for which
\begin{align*}
    \fulldcosts{\mu} \le \constraintv - \zeta1.\numberthis\label{eq:slatercondition} 
\end{align*}
\end{assumption}
\section{Characterization of Strong Duality}\label{sec:strongduality}
To solve \eqref{eq:macpomdp}, we define the Lagrangian function $\wh{L} : \m{\uspace} \times \mcl{Y} \ra \mbb{R} \cup \{\infty\} $ as follows.
\begin{align*}
\lagsmix{\mu}{\lambda} &= \lagmix{\mu}{\lambda} 
\defeq \wh{C}(\mu) + \dotp{\lambda}{\wh{D}(\mu) -\constraintv}\\
&= \E{U\sim \mu}{} \underbrace{\l[ C(U) + \dotp{\lambda}{D(U) - \constraintv}  \r]}_{\defeq \lag{U}{\lambda}=\lags{U}{\lambda}}.
\numberthis\label{eq:lagrangian}
\end{align*}
Here, $\mcl{Y} \defeq \{ \lambda \in \mbb{R}^K : \lambda \ge 0\}$ is the set of tuples of $K$ non-negative real-numbers, each commonly known as a Lagrange-multiplier. 
Our main result shows that the the solution $\udl{C}$ satisfies
\begin{align*}
\optcosts &= \infsup{\mu\in \m{\uspace}}{\lambda\in \mcl{Y}} \lagsmix{\mu}{\lambda}
,\numberthis\label{eq:optccost:infsup}
\end{align*}
and that the inf and sup can be interchanged, i.e.,
\begin{align*}
\optcosts &= \supinf{\lambda\in \mcl{Y}}{\mu\in \m{\uspace}} \lagsmix{\mu}{\lambda}
.\numberthis\label{eq:optccost:supinf}
\end{align*}

\begin{thm}[Strong Duality and Existence of Saddle Point]\label{thm:strongduality}
Under Assumption \ref{assmp:boundedcosts}, the following statements hold.
\begin{enumerate}[leftmargin=0pt, itemindent=20pt, labelwidth=15pt, labelsep=5pt, listparindent=0.7cm, align=left]
\item[(a)] The optimal value satisfies 
\begin{align*}
\optcosts = \infsup{\mu\in\m{\uspace}}{\lambda\in \mcl{Y}} \lagsmix{\mu}{\lambda}
.\numberthis   
\end{align*}
\item[(b)] A mixture $\mu^\star \in \m{\uspace}$ is optimal if and only if $\optcosts = \sup_{\lambda\in \mcl{Y}} \lagsmix{\mu^\star}{\lambda}$.
\item[(c)] Strong duality holds for \eqref{eq:macpomdp}, i.e.,
\begin{align*}
\optcosts &= \infsup{\mu\in\m{\uspace}}{\lambda\in \mcl{Y}} \lagsmix{\mu}{\lambda}
= \supinf{\lambda\in \mcl{Y}}{\mu\in\m{\uspace}} \lagsmix{\mu}{\lambda}
.\numberthis
\end{align*}
Moreover, there exists a $\mu^\star \in \m{\uspace}$ such that $\optcosts = \sup_{\lambda \in \mcl{Y}} \lagsmix{\mu^\star}{\lambda} $ and $\mu^\star$ is optimal for \eqref{eq:macpomdp}. 
\item[(d)] If Assumption \ref{assmp:slatercondition} holds, then there also exists $\lambda^\star \in \mcl{Y}$ such that the following saddle-point condition holds for all $(\mu,\lambda)\in \m{\uspace} \times \mcl{Y}$,
\begin{align*}
\lagsmix{\mu^\star}{\lambda} \le \lagsmix{\mu^\star}{\lambda^\star} = \optcosts \le \lagsmix{\mu}{\lambda^\star}.
\numberthis\label{eq:saddlepointconditions} 
\end{align*}
i.e., $\mu^\star$ minimizes $\lagsmix{\cdot}{\lambda^\star}$ and $\lambda^\star$ maximizes $\lagsmix{\mu^\star}{\cdot}$. In addition to this, the primal dual pair $\l( \mu^\star, \lambda^\star \r)$ satisfies the complementary-slackness condition:
\begin{align*}\label{eq:compslack}
    \dotp{\lambda^\star}{\fulldcostsmix{\mu^\star}-\constraintv} = 0.\numberthis
\end{align*}
\end{enumerate}
\end{thm}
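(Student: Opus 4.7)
For part (a), I would exploit the nonnegativity of $\lambda \in \mcl{Y}$: for fixed $u$, $\sup_{\lambda} \lags{u}{\lambda} = \fullccosts{u}$ when $\fulldcosts{u} \le \constraintv$ (the supremum is attained at $\lambda = 0$) and equals $+\infty$ when any constraint is violated (pushing the violated coordinate of $\lambda$ to $\infty$ drives $\lags{u}{\lambda}$ to $+\infty$, since $\fullccosts{u}$ is bounded below by $\udl{c}/(1-\alpha)$ per Assumption~\ref{assmp:boundedcosts}). Taking the infimum over $u$ recovers $\optcosts$. Part (b) is then a direct corollary: when $\optcosts < \infty$, the equality $\sup_{\lambda}\lags{u^\star}{\lambda} = \optcosts$ forces $u^\star$ to be feasible with $\fullccosts{u^\star} = \optcosts$, and conversely; when $\optcosts = \infty$ every $u \in \uspace$ is optimal by convention, so nothing needs to be checked.

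For the minimax equality in (c), my plan is to invoke an extension of Sion's minimax theorem that accommodates functions with value $+\infty$, applied on the convex space $\uspacemix$ of (publicly randomized) mixed policy profiles. On $\uspacemix$, $\lags{\cdot}{\lambda}$ extends linearly in the first argument (hence convex) and is already linear in $\lambda$ (hence concave). Compactness of $\uspacemix$ is obtained by Tychonoff's theorem: each behavioral rule $\utn{u}{t}{n}$ lives in the product space $\m{\anspace{n}}^{\hstnspace{t}{0} \times \hstnspace{t}{n}}$, each factor $\m{\anspace{n}}$ is a compact metric space under the weak topology (since $\anspace{n}$ is finite), and the product ranges over countably many indices (the common- and private-history spaces are countable at every time step). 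Granted that $u \mapsto \lags{u}{\lambda}$ is lower semi-continuous on $\uspacemix$ (see below), the Sion-type theorem delivers $\inf_{u \in \uspacemix} \sup_{\lambda} \lags{u}{\lambda} = \sup_{\lambda} \inf_{u \in \uspacemix} \lags{u}{\lambda}$. Since $\lags{\cdot}{\lambda}$ is affine under external mixing, the infimum over $\uspacemix$ coincides with that over $\uspace$, and chaining this with part~(a) yields strong duality.

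Existence of a primal optimum $u^\star$ then follows by a Weierstrass argument: $u \mapsto \sup_{\lambda} \lags{u}{\lambda}$ is lower semi-continuous (as a supremum of lower semi-continuous functions) on the compact $\uspacemix$, so it attains its infimum, and the minimizer can be realized in $\uspace$ by the same mixing reduction. Under Slater's condition, a dual optimizer $\lambda^\star \in \mcl{Y}$ exists because Slater forces near-maximizers of $\lambda \mapsto \inf_u \lags{u}{\lambda}$ to lie in a bounded subset of $\mcl{Y}$: from $\lags{\ov{u}}{\lambda} \le \fullccosts{\ov{u}} - \zeta\|\lambda\|_1$ we see $\inf_u \lags{u}{\lambda} \to -\infty$ as $\|\lambda\|\to \infty$, so a maximizer exists on the resulting compact set, on which $\lambda \mapsto \inf_u \lags{u}{\lambda}$ is upper semi-continuous (as an infimum of affine-in-$\lambda$ functions). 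The saddle-point inequalities and complementary slackness are then immediate from $\fullccosts{u^\star} = \optcosts = \lags{u^\star}{\lambda^\star} = \fullccosts{u^\star} + \dotp{\lambda^\star}{\fulldcosts{u^\star} - \constraintv}$, which forces the inner product to vanish (it is already nonpositive since $\lambda^\star \ge 0$ and $\fulldcosts{u^\star} \le \constraintv$).

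The principal obstacle I expect is establishing the lower semi-continuity of $\fullccosts{\cdot}$ (and the continuity of the bounded $\fulldcosts{\cdot}$) on $\uspacemix$ equipped with the product-of-weak-convergences topology. The path measure $\prup{u}{P_1}$ over the trajectory $(\Stt{1:\infty}, \Ot{1:\infty}, \At{1:\infty})$ depends on $u$ through a countable sequence of stagewise kernels; one must show, by induction on $t$, that integrals of bounded continuous functions of any finite prefix depend continuously on $u$, exploiting weak convergence of product measures together with the countable-observation and finite-action structure. Extending to the infinite-horizon discounted sum then requires a Fatou-type argument for $\fullccosts{\cdot}$ -- here the lower bound on $c$ from Assumption~\ref{assmp:boundedcosts} is crucial to rule out $-\infty$ cancellations and to justify the possibly-$+\infty$ limit -- and a dominated convergence argument for the uniformly bounded $\fulldcosts{\cdot}$. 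This measure-theoretic scaffolding is what enables the Sion-type minimax theorem to be applied and, in my view, constitutes the bulk of the technical effort.
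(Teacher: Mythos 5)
Your route is the paper's own: parts (a) and (b) are argued identically; for (c) you compactify the policy space as a countable product of weak-convergence-compact simplices via Tychonoff, pass to mixtures, prove lower semi-continuity of $C$ and $D_k$ by an induction on finite-time history--action marginals followed by a Fatou argument (this is exactly the paper's Lemmas \ref{lem:lsc}, \ref{lem:lsc2} and \ref{lem:puth}), and then apply a Sion-type minimax theorem tolerating the value $+\infty$ (Proposition \ref{prop:sionminimax}). Your treatment of the Slater step is in fact more explicit than the paper's, which defers to ``Lagrange-multiplier theory.''

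There is, however, one step whose justification as written does not work: the passage between $\uspace$ and the mixture space. Affineness of $\lags{\cdot}{\lambda}$ under external mixing only yields $\inf_{\mu}\lagsmix{\mu}{\lambda}=\inf_{u}\lags{u}{\lambda}$ for each \emph{fixed} $\lambda$, i.e.\ it equates the two $\sup\inf$ values. It does \emph{not} give $\infsup{u\in\uspace}{\lambda}\lags{u}{\lambda}\le\infsup{\mu}{\lambda}\lagsmix{\mu}{\lambda}$, because $\sup_{\lambda}$ and $\mbb{E}^{(\mu)}$ do not commute in the needed direction: a mixture can satisfy $\mbb{E}^{(\mu)}\l[\fulldcosts{u}\r]\le\constraintv$ ``on average'' while no profile in its support is feasible, so a priori its $\sup_{\lambda}$ value could undercut $\optcosts$. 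Closing this requires the paper's Lemma \ref{lem:dominance}, a Kuhn-type replication result: every \emph{factorized} mixture $\mu$ induces the same joint law of $\l(\Hst{t},\At{t}\r)$ for all $t$ as a single behavioral profile $\udl{u}(\mu)\in\uspace$, hence the same $C$, $D$, and Lagrangian for every $\lambda$ simultaneously; this is also what converts the minimizer $\mu^\star$ produced by the minimax theorem into the claimed $u^\star\in\uspace$. Your phrase ``the same mixing reduction'' gestures at this, but linearity alone does not supply it. Relatedly, the mixtures must be \emph{decentralized} elements of $\uspacemix=\prod_{n=1}^{N}\m{\uspacen{n}}$, each agent randomizing independently: the ``publicly randomized'' mixtures you mention (general measures on $\uspace$ with common randomness) induce correlated action kernels that no profile in $\uspace$ can reproduce, so the reduction back to $\uspace$ would fail for that larger class.
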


\begin{proof}
\begin{enumerate}[leftmargin=0pt, itemindent=20pt,
labelwidth=15pt, labelsep=5pt, 
align=left]
\item[(a)] If $\mu \in \m{\uspace}$ is feasible (i.e., it satisfies $\fulldcostsmix{\mu} \le \constraintv$), then the $\sup$ is obtained by choosing $\lambda = 0$, so
\begin{align*}
\sup_{\lambda\in\mcl{Y}} \lagsmix{\mu}{\lambda} &= \fullccostsmix{\mu}.
\numberthis\label{eq:ufeasible}
\end{align*}
If $\mu \in \m{\uspace}$ is not feasible, then 
\begin{align*}
\sup_{\lambda\in \mcl{Y}} \lagsmix{\mu}{\lambda} = \infty.
\numberthis\label{eq:unotfeasible}
\end{align*}
Indeed, suppose WLOG that the $k^{th}$ constraint is violated, i.e., $\fulldkcostsmix{k}{\mu} > \constraintv_k$, then $\infty$ can be obtained by choosing $\lambda_k$ arbitrarily large and setting other $\lambda_k$'s to 0.

\hspace{5pt} From \eqref{eq:ufeasible}, \eqref{eq:unotfeasible}, and our convention that $\optcosts = \infty$ whenever the feasible-set is empty, it follows that
\begin{align*}
\optcosts = \infsup{\mu\in\m{\uspace}}{\lambda\in\mcl{Y} } \lagsmix{\mu}{\lambda}.
\numberthis\label{eq:fullccostisinfsup}
\end{align*}

\item[(b)] By our convention on the value of $\optcosts$ (when there is no feasible mixture), $\mu^\star$ is optimal if and only if $\fullccostsmix{\mu^\star} = \optcosts $, i.e., $\sup_{\lambda\in \mcl{Y} } \lagsmix{\mu^\star}{\lambda} = \optcosts$.

\item[(c)] To establish strong duality, we use 
Proposition \ref{prop:sionminimax} 
which requires $\m{\uspace}$ and $\mcl{Y}$ to be convex topological spaces, with $\m{\uspace}$ being compact as well. It is clear that $\mcl{Y}$ is convex and we can endow it with the usual subspace topology of $\mbb{R}^K$. Convexity of $\m{\uspace}$ is trivial and its compactness has been ensured in Section \ref{sec:optimization_problem}. By definition, $\wh{L}$ is affine and thus trivially concave in $\lambda$. Proposition \ref{prop:integral_linearity} 
implies that $\wh{L}$ is convex in $\mu$ and Lemma \ref{lem:lsc2} shows that $\wh{L}$ is lower semi-continuous\footnote{For definition of lower semi-continuity, see 
Definition \ref{dfn:lsc}.} 
in $\mu$. 
From Proposition \ref{prop:sionminimax}, 
it then follows that
\begin{align*}
\infsup{\mu\in\m{\uspace} }{\lambda\in \mcl{Y}} \lagsmix{\mu}{\lambda} = \supinf{\lambda\in \mcl{Y}}{\mu\in \m{\uspace}} \lagsmix{\mu}{\lambda},
\end{align*}
and that there exists $\mu^\star \in \m{\uspace}$ such that
\begin{align*}
\sup_{\lambda\in \mcl{Y}} \lagsmix{\mu^\star}{\lambda} = \infsup{\mu\in\m{\uspace}}{\lambda\in \mcl{Y}} \lagsmix{\mu}{\lambda}.
\end{align*}
The optimality of $\mu^\star$ is implied by parts (b) and (a). 

\item[(d)] This follows from Lagrange-multiplier theory.
\end{enumerate}
This concludes the proof.
\end{proof}

\begin{lem}[Lower Semi-Continuity of $\wh{L}$ on $\m{\uspace}$]\label{lem:lsc2}
    Under Assumption \ref{assmp:boundedcosts}, $\wh{L}$ is lower semi-continuous on $\m{\uspace}$.
\end{lem}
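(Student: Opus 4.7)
The plan is to prove the lemma by first establishing lower semi-continuity of $\lags{\cdot}{\lambda}$ on the compact metrizable space $\uspace$ for each fixed $\lambda\in\mcl{Y}$, and then lifting to $\lagsmix{\cdot}{\lambda}$ on $\uspacemix$ via the Portmanteau theorem. Writing $\lags{u}{\lambda} = \fullccosts{u} + \sum_{k=1}^K \lambda_k (D_k(u) - \constraintv_k)$ with $\lambda_k \geq 0$, it suffices to show that $C$ is lsc on $\uspace$ and each $D_k$ is continuous on $\uspace$.

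The algebraic engine is the countability of joint-histories together with the fact that, conditional on an action sequence, the joint-distribution of $(\Stt{1:t},\Ot{1:t})$ is determined entirely by $\mcl{P}_{tr}$ and $P_1$. This yields the expansion
\begin{equation*}
\E{u}{P_1}\l[c(\Stt{t}, \At{t})\r] \,=\, \sum_{\hst{h}{t} \in \hstspace{t},\, \at{t} \in \aspace} \tilde{c}(\hst{h}{t}, \at{t})\, \prod_{\tau=1}^{t} \ut{u}{\tau}(\at{\tau} \mid \hst{h}{\tau}),
\end{equation*}
where $\tilde{c}$ is policy-independent and satisfies $\tilde{c}(\hst{h}{t}, \at{t}) \geq \udl{c}\,\rho(\hst{h}{t})$ for the policy-independent non-negative quantity $\rho(\hst{h}{t})$ obtained by integrating out the state trajectory under $\mcl{P}_{tr}$ and $P_1$. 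Under the identification $\uspace \simeq \xuspace$, each factor $\utn{u}{\tau}{n}(\atn{\tau}{n} \mid \hstn{h}{\tau}{0}, \hstn{h}{\tau}{n})$ is a single coordinate of the point in $\xuspace$, so every summand above is a finite product of coordinates, hence continuous in the product topology.

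Define finite-horizon truncations $C_T(u) \defeq \E{u}{P_1}[\sum_{t=1}^T \alpha^{t-1}\cCost]$ and $D_{k,T}$ analogously. The shifted function $\tilde{C}_T(u) \defeq C_T(u) - \udl{c}\sum_{t=1}^T \alpha^{t-1}$ is a countable sum of non-negative continuous functions on $\uspace$ (one per triple $(t,\hst{h}{t},\at{t})$), hence the supremum of its finite partial sums and therefore lsc. Since $\tilde{C}_T$ is pointwise non-decreasing in $T$ with $\tilde{C}_T \uparrow C - \udl{c}/(1-\alpha)$, the function $C$ is lsc on $\uspace$ as a shifted supremum of lsc functions. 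For $D_k$, the kernel satisfies $|\tilde{d}_k(\hst{h}{t},\at{t})| \leq \ulbar{d}\,\rho(\hst{h}{t})$ with $\sum_{\hst{h}{t},\at{t}} \ulbar{d}\,\rho(\hst{h}{t}) < \infty$; a dominated-convergence argument (Tannery) with this $u$-independent summable bound gives continuity of each $D_{k,T}$, and the uniform tail estimate $|D_k(u) - D_{k,T}(u)| \leq \ulbar{d}\,\alpha^T/(1-\alpha)$ makes $D_k$ a uniform limit of continuous functions, hence continuous. Consequently $\lags{\cdot}{\lambda}$ is lsc on $\uspace$ and bounded below by a $\lambda$-dependent finite constant.

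To finish, let $\mu^{(m)} \to \mu$ in $\uspacemix$. Coordinatewise weak convergence in each $\m{\uspacen{n}}$ implies weak convergence of the product measures on $\m{\uspace}$ (a standard fact for products of weakly convergent measures on metric spaces). Applying the Portmanteau theorem for lsc integrands bounded below,
\begin{equation*}
\liminf_{m \to \infty} \lagsmix{\mu^{(m)}}{\lambda} \,=\, \liminf_{m \to \infty} \int_{\uspace} \lags{u}{\lambda}\, d\mu^{(m)}(u) \,\geq\, \int_{\uspace} \lags{u}{\lambda}\, d\mu(u) \,=\, \lagsmix{\mu}{\lambda},
\end{equation*}
which is the desired lower semi-continuity. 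The main obstacle I expect is the lsc of $C$ itself: because $c$ may be unbounded above, dominated convergence does not apply directly to $C$, and the shift-to-non-negativity trick combined with the monotone truncation in $T$ is what turns the problem into a clean supremum of lsc functions; the Portmanteau step is then standard.
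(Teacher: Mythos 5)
Your proof is correct, and its overall architecture is the same as the paper's: first establish lower semi-continuity of $\lags{\cdot}{\lambda}$ on the compact metrizable space $\uspace$, then transfer it to $\uspacemix$ using the fact that $\mu'\mapsto\int f\,d\mu'$ is lower semi-continuous in the weak topology whenever $f$ is lsc and bounded below. For the latter step the paper does not cite a Portmanteau extension but proves it from scratch (Proposition \ref{prop:lsc}, via Baire's monotone approximation of a bounded-below lsc function by continuous functions); your invocation of the standard fact is equivalent, and your remark that coordinatewise weak convergence of the factors gives weak convergence of the product measures is the careful version of the paper's passing comment that convergence in $\uspacemix$ yields convergence in $\m{\uspace}$. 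Where you differ is in the inner step on $\uspace$: the paper expands $\fullccosts{u}$ as a countable sum of terms $\alpha^{t-1}\zuphsts{u}{t}{\hst{h}{t},\at{t}}$, proves by induction (Lemma \ref{lem:puth}) that each $\pruphsts{u}{t}{\hst{h}{t},\at{t}}$ converges along convergent sequences of policies, and applies Fatou's lemma to the shifted non-negative series; you instead note that each summand is a finite product of coordinates of $\xuspace$, hence continuous, and realize the shifted series as a supremum of continuous partial sums. These are two phrasings of the same mechanism (continuity of the summands is exactly the content of Lemma \ref{lem:puth}, and Fatou on a non-negative series of continuous terms is the sup-of-partial-sums argument), so nothing essential is gained or lost. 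Your treatment of the constraint costs is genuinely stronger: using the $u$-independent summable bound $\ulbar{d}\,\rho(\hst{h}{t})$ and the geometric tail you obtain full continuity of each $D_k$ by uniform convergence, whereas the paper only claims lower semi-continuity — which already suffices here since $\lambda_k\ge 0$.
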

\begin{proof}
    Fix $\lambda\in\mcl{Y}$ and $\mu\in \m{\uspace}$. Let $\l\{ \mu_i \r\}_{i=1}^{\infty}$ be a sequence of measures in $\m{\uspace}$ that converges to $\mu$. 
    We want to show
    \begin{align*}
        \liminf_{i\ra\infty} \E{U \sim \mu_i}{} \l[  \lags{U}{\lambda} \r] \ge \E{U \sim \mu}{} \l[  \lags{U}{\lambda} \r].
    \end{align*}
    By Lemma \ref{lem:lsc}, $L$ is point-wise lower semi-continuous on $\uspace$. Therefore, 
    Proposition \ref{prop:lsc} 
    applies on $\m{\uspace}$ and the above inequality follows.
\end{proof}

\begin{lem}[Lower Semi-Continuity of $L$ on $\uspace$]\label{lem:lsc}
Under Assumption \ref{assmp:boundedcosts}, the functions $C$ and $D_k$'s are lower semi-continuous on $\uspace$. Hence, $L$ is lower semi-continuous on $\uspace$. 
\end{lem}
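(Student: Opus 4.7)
The plan is to reduce lower semi-continuity of $C$, $D_k$, and hence $L(\cdot, \lambda)$ to showing that each is a pointwise supremum of an increasing sequence of non-negative continuous functions on $\uspace$. The key first move is to use Assumption~\ref{assmp:boundedcosts} to subtract off the lower bounds: set $\tilde{c}(s,a) \defeq c(s,a) - \udl{c} \ge 0$ and $\tilde{d}_k(s,a) \defeq d_k(s,a) - \udl{d}_k \ge 0$. The corresponding expected discounted aggregates $\tilde{C}(u)$ and $\tilde{D}_k(u)$ differ from $C(u)$ and $D_k(u)$ only by the additive constants $\udl{c}/(1-\alpha)$ and $\udl{d}_k/(1-\alpha)$, so it suffices to show lower semi-continuity of $\tilde{C}$ and of each $\tilde{D}_k$.

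Next, using the Ionesca--Tulcea construction of $\prup{u}{P_1}$, the joint distribution of $(O_{1:t}, A_{1:t}, S_t)$ factorizes so that its dependence on $u$ is only through the product $\prod_{\tau=1}^t u_\tau(a_\tau \mid h_\tau)$, where $h_\tau$ is the joint-history consistent with $(o_{1:\tau}, a_{1:\tau-1})$. Integrating out $S_t$ against $\tilde{c}(\cdot, a_t)$ yields, for each $t$,
\begin{align*}
\E{u}{P_1}\l[ \tilde{c}(S_t, A_t) \r]
= \sum_{o_{1:t} \in \ospace^t}\sum_{a_{1:t} \in \aspace^t}
\gamma_t(o_{1:t}, a_{1:t}) \prod_{\tau=1}^{t} u_\tau( a_\tau \mid h_\tau ),
\end{align*}
where each coefficient $\gamma_t \ge 0$ depends on $\tilde{c}, \mcl{P}_{tr}$, and $P_1$, but not on $u$. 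Under the product topology placed on $\uspace$, each coordinate projection $u \mapsto u_\tau^{(n)}(a_\tau^{(n)} \mid h_\tau^{(0)}, h_\tau^{(n)})$ is continuous because weak convergence on $\m{\anspace{n}}$ with $\anspace{n}$ finite reduces to pointwise convergence of probability-mass values; hence each summand above is a non-negative continuous function of $u$.

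The core analytic fact I would then invoke is that a countable sum of non-negative continuous functions is lower semi-continuous, since its value at $u$ equals the supremum of its finite partial sums, each of which is continuous. Thus $u \mapsto \E{u}{P_1}[\tilde{c}(S_t, A_t)]$ is lsc for every $t$, as is the finite sum $\tilde{C}_T(u) \defeq \sum_{t=1}^T \alpha^{t-1} \E{u}{P_1}[\tilde{c}(S_t, A_t)]$. By non-negativity, $T \mapsto \tilde{C}_T(u)$ is non-decreasing and $\tilde{C}(u) = \sup_T \tilde{C}_T(u)$ by monotone convergence, so $\tilde{C}$, being a supremum of lsc functions, is itself lsc. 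The same argument establishes lsc of each $\tilde{D}_k$, and hence of $D_k$. Finally, for any fixed $\lambda \in \mcl{Y}$, the function $L(\cdot, \lambda) = C + \sum_{k=1}^K \lambda_k(D_k - \constraintv_k)$ is a non-negative linear combination of lsc functions plus a constant, and so is lsc on $\uspace$.

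The main obstacle in this plan is the careful justification of the expansion in the second paragraph, which requires unpacking $\prup{u}{P_1}$ through the transition kernel $\mcl{P}_{tr}$ and verifying non-negativity of the coefficients $\gamma_t$; this non-negativity is precisely what converts countable sums into lsc functions via the partial-sum/supremum trick, and is the sole motivation for the initial reduction to $\tilde{c}$ and $\tilde{d}_k$. Everything else is a routine passage from continuity to lsc using monotone limits of non-negative quantities.
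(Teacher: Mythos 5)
Your proposal is correct and follows essentially the same route as the paper: both decompose the discounted cost into a countable sum over histories of terms whose dependence on $u$ is a finite product of coordinate-projection probabilities (hence continuous in the product topology), and then pass to the limit using the lower bound on the costs. The only difference is cosmetic — you shift the costs to be non-negative up front and use the sup-of-partial-sums characterization of lower semi-continuity, whereas the paper keeps the original costs and invokes Fatou's lemma for sequences bounded from below; your variant in fact sidesteps the integrability caveat that Fatou's lemma with a merely bounded-below integrand would otherwise require.
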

\begin{proof}
We will prove the statement for $C$. The proof of lower semi-continuity of $D_k$'s is similar. For brevity, let 
\begin{align*}
\pruphsts{u}{t}{\hst{h}{t}, \at{t}} &= \pruphst{u}{t}{\hst{h}{t}, \at{t}} \defeq \prup{u}{P_1}\l(\Hst{t} = \hst{h}{t}, \At{t} = \at{t}\r),
\\
\zuphsts{u}{t}{\hst{h}{t}, \at{t}} &= \zuphst{u}{t}{\hst{h}{t}, \at{t}}\\
&\hspace{-25pt} \defeq \pruphsts{u}{t}{\hst{h}{t}, \at{t}} \mbb{E}_{P_1}\l[ \cCost | \Hst{t} = \hst{h}{t}, \At{t} = \at{t} \r],
\end{align*}
where we use the convention $0\cdot\infty=0$. Then,
\begin{align*}
\fullccosts{u} &= \E{u}{P_1}\l[ \sum_{t=1}^{\infty} \alpha^{t-1} c(S_t, A_t) \r] \\
&= \E{u}{P_1}\l[ \sum_{t=1}^{\infty} \alpha^{t-1} \l( c(S_t, A_t) - \udl{c} \r) \r] + \sum_{t=1}^{\infty} \alpha^{t-1} \udl{c}\\
&\labelrel{=}{eqr:cp1u:a} \sum_{t=1}^{\infty} \alpha^{t-1} \E{u}{P_1} \l[ c(S_t, A_t) - \udl{c} \r] + \sum_{t=1}^{\infty} \alpha^{t-1} \udl{c}\\
&\labelrel{=}{eqr:cp1u:b} \sum_{t=1}^{\infty} \alpha^{t-1} \E{u}{P_1} \l[ \mbb{E}_{P_1} \l[ c(S_t, A_t)  | \Hst{t}, \At{t} \r] \r]\\
&= \sum_{t=1}^{\infty} \sum_{\hst{h}{t} \in \hstspace{t}} \sum_{\at{t}\in \aspace} \alpha^{t-1} \zuphsts{u}{t}{\hst{h}{t}, \at{t}}.
\end{align*}
Here, \eqref{eqr:cp1u:a} follows from applying the Monotone-Convergence Theorem to the (increasing non-negative) sequence $\{ \sum_{t=1}^{i} \alpha^{t-1} \l( c\l( \Stt{t}, \At{t} \r) - \udl{c} \r) \}_{i=1}^{\infty}$ 
(see Proposition \ref{prop:mct}); 
and \eqref{eqr:cp1u:b} uses the tower property of conditional expectation.\footnote{The conditional expectations $\mbb{E}_{P_1} \l[ c(S_t, A_t) | \Hst{t}, \At{t} \r]$ exist and are unique because $c(\cdot, \cdot)$ is bounded from below.}

Let $\l\{ \useq{i}{u} \r\}_{i=1}^{\infty}$ be a sequence in $\uspace$ that converges to $u$. By Fatou's Lemma 
(see Proposition \ref{prop:fatou}), 
\begin{align*}
\liminf_{i\ra \infty} \fullccosts{\useq{i}{u}} \ge \sum_{t=1}^{\infty} \sum_{\hst{h}{t} \in \hstspace{t}} \sum_{\at{t}\in \aspace} \alpha^{t-1} \liminf_{i\ra\infty} \zuphsts{\useq{i}{u}}{t}{\hst{h}{t}, \at{t}}.\numberthis\label{eq:step1}
\end{align*}
Following Lemma \ref{lem:puth}, 
$\pruphsts{\useq{i}{u}}{t}{\hst{h}{t}, \at{t}}\ge 0 $ converges to $\pruphsts{u}{t}{\hst{h}{t}, \at{t}}$. Therefore,
\begin{align*}
\liminf_{i\ra\infty} \zuphsts{\useq{i}{u}}{t}{\hst{h}{t}, \at{t}} \ge \zuphsts{u}{t}{\hst{h}{t}, \at{t}}.\numberthis\label{eq:step2}
\end{align*}
Then, \eqref{eq:step1} and \eqref{eq:step2} result in
$\liminf_{i\ra \infty} \fullccosts{\useq{i}{u}} \ge \fullccosts{u},$ 
which establishes the lower semi-continuity of $\fullccosts{u}$. 
\end{proof}

\begin{lem}\label{lem:puth}[Limit Probabilities for a converging sequence of policy-profiles]
Let $\l\{ \useq{i}{u} \r\}_{i=1}^{\infty}$ be a sequence in $\uspace$ that converges to $u$. Then, for any $t \in \mbb{N}$, $ \hst{h}{t} \in \hstspace{t} $, and $\at{t} \in \mcl{A}$,
\begin{align*}
\lim_{i\ra \infty}  \pruphsts{\useq{i}{u}}{t}{\hst{h}{t}, \at{t}} = \pruphsts{u}{t}{\hst{h}{t}, \at{t}},
\end{align*}
where $\pruphsts{\cdot}{t}{\hst{h}{t}, \at{t}} = \prup{\cdot}{P_1} \l( \Hst{t} = \hst{h}{t}, \At{t} = \at{t} \r)$. In other words, for every $t \in \mbb{N}$, the sequence of measures $\l\{ \pruphsts{ \useq{i}{u}}{t}{\cdot, \cdot} \r\}_{i=1}^{\infty}$ converges weakly to $\pruphsts{u}{t}{\cdot, \cdot}$.
\end{lem}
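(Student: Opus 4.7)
The plan is to expand $p(\cdot, t, h_t, a_t)$ into a product in which the dependence on the policy-profile is concentrated in finitely many factors, and then to transfer coordinate-wise convergence in the product topology of $\mathcal{U}$ to convergence of that finite product.

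First, fix $t\in\mathbb{N}$, $h_t=(o_{1:t},a_{1:t-1})\in\mathcal{H}_t$, and $a_t\in\mathcal{A}$. Using the chain rule together with the observation-transition law \eqref{eq:psabo} (which does not depend on the policy) and the factorization \eqref{eq:uah}, I would write
\begin{align*}
    p(u,t,h_t,a_t)
    &= P_1\bigl(\mathcal{S},o_1\bigr)
       \prod_{t'=2}^{t}\mathbb{P}_{P_1}\bigl(o_{t'}\mid h_{t'-1},a_{t'-1}\bigr)\\
    &\quad\times \prod_{t'=1}^{t}\prod_{n=1}^{N}
       u_{t'}^{(n)}\bigl(a_{t'}^{(n)}\mid h_{t'}^{(0)},h_{t'}^{(n)}\bigr).
\end{align*}
Only the last double product depends on $u$, and it is a finite product of $Nt$ nonnegative factors each bounded by $1$.

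Next, recall that under the topology introduced in the proof of Theorem \ref{thm:strongduality}, $\mathcal{U}$ was identified with the product space $\prod_{n=1}^{N}\prod_{h\in\mathcal{H}^{(n)}} M_1(\mathcal{A}^{(n)};h)$ with the product topology. Therefore $u^{(i)}\to u$ in $\mathcal{U}$ is equivalent to coordinate-wise weak convergence: for every agent $n\in[N]$, every $t'\in\mathbb{N}$, and every $(h_{t'}^{(0)},h_{t'}^{(n)})\in\mathcal{H}_{t'}^{(0)}\times\mathcal{H}_{t'}^{(n)}$,
\begin{align*}
    {}^{i}u_{t'}^{(n)}\bigl(\cdot\mid h_{t'}^{(0)},h_{t'}^{(n)}\bigr)
    \xrightarrow{\text{weak}}
    u_{t'}^{(n)}\bigl(\cdot\mid h_{t'}^{(0)},h_{t'}^{(n)}\bigr)
    \quad\text{in }M_1(\mathcal{A}^{(n)}).
\end{align*}
Because $\mathcal{A}^{(n)}$ is finite and discrete, every singleton is both open and closed, so weak convergence of probability measures on $\mathcal{A}^{(n)}$ is equivalent to pointwise convergence of the probability mass functions. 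Consequently, for every $n$, $t'\le t$, and the specific values appearing in $h_t$,
\begin{align*}
    {}^{i}u_{t'}^{(n)}\bigl(a_{t'}^{(n)}\mid h_{t'}^{(0)},h_{t'}^{(n)}\bigr)
    \longrightarrow
    u_{t'}^{(n)}\bigl(a_{t'}^{(n)}\mid h_{t'}^{(0)},h_{t'}^{(n)}\bigr)
    \quad\text{as }i\to\infty.
\end{align*}

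Finally, since only finitely many (precisely $Nt$) policy factors appear in the expansion, and finite products of convergent bounded sequences of real numbers converge to the product of the limits, I can multiply the convergent sequences together and then multiply by the $u$-independent prefactor $P_1(\mathcal{S},o_1)\prod_{t'=2}^{t}\mathbb{P}_{P_1}(o_{t'}\mid h_{t'-1},a_{t'-1})$ to obtain
\begin{align*}
    \lim_{i\to\infty} p\bigl({}^{i}u,t,h_t,a_t\bigr) = p(u,t,h_t,a_t),
\end{align*}
which is the desired pointwise limit; since $\mathcal{H}_t\times\mathcal{A}$ is countable and discrete, this is precisely weak convergence of the measures $\{p({}^{i}u,t,\cdot,\cdot)\}_i$ to $p(u,t,\cdot,\cdot)$. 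I do not anticipate a real obstacle here: the key conceptual step is recognizing that the product topology on $\mathcal{U}$ was tailored so that exactly this coordinate-wise convergence is available, and the finiteness of $\mathcal{A}^{(n)}$ makes the translation from weak to pointwise convergence immediate. The only minor care needed is the bookkeeping to verify that indeed only finitely many policy factors appear in $p(\cdot, t, h_t, a_t)$ for each fixed $t$.
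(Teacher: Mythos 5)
Your proposal is correct and follows essentially the same route as the paper: product topology gives coordinate-wise weak convergence, finiteness of each $\anspace{n}$ upgrades this to pointwise convergence of the probability mass functions, and $\pruphsts{u}{t}{\hst{h}{t},\at{t}}$ is a finite product of these convergent factors times a $u$-independent prefactor. The only cosmetic difference is that the paper organizes the finite-product step as a forward induction on $t$ rather than writing out the explicit product formula, but the underlying argument is identical.
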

\begin{proof}
Given that $\useq{i}{u}$ converges to $u$, by Proposition \ref{prop:conv_in_prod_topology}, for every $n \in [N]$, $\useq{i}{\utn{u}{t}{n}} (\hstn{h}{t}{0}, \hstn{h}{t}{n} )$ converges weakly to $ \utn{u}{t}{n} ( \hstn{h}{t}{0}, \hstn{h}{t}{n} ) $. Since $\mcl{A}^n$ is finite, this means that for each $\an{n}\in \anspace{n}$, $\useq{i}{\utn{u}{t}{n}} ( \an{n} | \hstn{h}{t}{0}, \hstn{h}{t}{n} )$ converges to $\utn{u}{t}{n} ( \an{n} | \hstn{h}{t}{0}, \hstn{h}{t}{n} )$, which further implies that for all $a \in \aspace$, $ \useq{i}{\ut{u}{t}} ( a | \hst{h}{t}) $ converges to $ \ut{u}{t} ( a | \hst{h}{t}) $. Now, we use forward induction to prove the statement. 
\begin{enumerate}
[leftmargin=0pt, 
itemindent=10pt,
labelwidth=0pt, 
labelsep=5pt, 
]
\item \textbf{Base Case}:  For time $t=1$, let $\ot{1} \in \hstspace{1} = \ospace$ and $\at{1} \in \mcl{A}$. We have
\begin{align*}
\pruphsts{\useq{i}{u}}{1}{\ot{1}, \at{1}}
=P_1\l( \sspace, o \r) \useq{i}{\ut{u}{1}} \l( \at{1} | \ot{1} \r) 
\ra \pruphsts{u}{1}{\ot{1}, \at{1}}.
\end{align*}

\item \textbf{Induction Step}: Assuming that the statement is true for time $t$, we show that it is true for time $t+1$. Let $\hst{h}{t+1} = \l( \ot{1:t+1}, \at{1:t} \r) = \l( \hst{h}{t}, \at{t}, \ot{t+1} \r) \in \hstspace{t+1}$ and $\at{t+1} \in \aspace$. We have
\begin{align*}
&\pruphsts{\useq{i}{u}}{t+1}{\hst{h}{t+1}, \at{t+1}} =  
\pruphsts{\useq{i}{u}}{t}{\hst{h}{t}, \at{t}} \\
&\hspace{25pt} \times \useq{i}{\ut{u}{t+1}} \l( \at{t+1} | \hst{h}{t+1} \r) \pr_{P_1} \l( \ot{t+1} | \hst{h}{t}, \at{t} \r).
\end{align*}
By inductive hypothesis, $\pruphsts{\useq{i}{u}}{t}{\hst{h}{t}, \at{t}} $ converges to $\pruphsts{u}{t}{\hst{h}{t}, \at{t}}$, and $ \useq{i}{\ut{u}{t}}\l( \at{t+1} | \hst{h}{t+1}\r) $ converges to $ \ut{u}{t} \l( \at{t+1} | \hst{h}{t+1}\r) $ by assumption. We conclude that $\pruphsts{\useq{i}{u}}{t+1}{\hst{h}{t+1}, \at{t+1}}$ converges to $\pruphsts{u}{t+1}{\hst{h}{t+1}, \at{t+1}}$.
\end{enumerate}
This completes the proof.
\end{proof}

\section{Conclusion}\label{sec:conclusion}
In this work, we studied a (cooperative) multi-agent constrained POMDP in the setting of infinite-horizon expected total discounted costs. We established strong duality and existence of a saddle point using an extension of Sion's Minimax Theorem which required giving a suitable topology to the space of all possible policy-profiles and then establishing lower semi-continuity of the Lagrangian function. The strong duality result provides a firm theoretical footing for future development of primal-dual type planning and learning algorithms for 
{MA}-C-POMDPs---see~\cite{khan2023cooperative} for one such algorithm. 
\appendices


\renewcommand{\theequation}{\thesection.\arabic{equation}}
\section{Helpful Facts and Results}\label{sec:appendix:helpful_facts}
\begin{dfn}[Semi-continuity]\label{dfn:lsc}
A function $f : \mcl{X} \mapsto [-\infty, \infty]$ on a topological space $\mcl{X}$ is called \emph{lower semi-continuous} if for every point $x_0 \in \mcl{X}$, 
$\liminf\limits_{x\ra x_0} f(x) \ge f(x_0)$. 
We call $f$ \emph{upper semi-continuous} function if $-f$ is lower semi-continuous.
\end{dfn}

\begin{prop}[Monotone Convergence Theorem]\label{prop:mct}
Let $\l(X, \mcl{M}, \mu \r)$ be a measure-space. Let $\l\{ f_i \r\}_{i=1}^{\infty}$ be an increasing sequence of measurable functions which are uniformly bounded-from-below. Then, 
\begin{align*}
&\int_{X} \lim_{i\ra\infty} f_i(x) \mu(dx) = \lim_{i\ra\infty} \int_{X} f_i(x) \mu(dx). 
\end{align*}
\end{prop}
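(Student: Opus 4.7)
The plan is to reduce the stated version to the classical Monotone Convergence Theorem for non-negative measurable functions, and then prove the classical result by a standard simple-function approximation argument. Let $M \in \mbb{R}$ denote a uniform lower bound for $\{f_i\}_{i=1}^{\infty}$ (whose existence is the hypothesis of the proposition), and form the translated sequence $g_i := f_i - M$. Then $\{g_i\}$ is a pointwise-increasing sequence of non-negative measurable functions with $\lim_i g_i = \lim_i f_i - M$. If the classical MCT applies to $\{g_i\}$, linearity of the integral (interpreted in the extended reals when necessary) yields the desired equality for $\{f_i\}$ after subtracting $M \mu(X)$ from both sides.

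For the classical (non-negative) MCT applied to $\{g_i\}$, the easy direction $\lim_i \int g_i \, d\mu \le \int \lim_i g_i \, d\mu$ follows immediately from monotonicity of the Lebesgue integral, since $g_i \le \lim_j g_j$ pointwise. For the reverse inequality, I would fix a non-negative simple function $s \le g := \lim_i g_i$ and a parameter $\eps \in (0,1)$, and define the measurable sets $E_n := \{x \in X : g_n(x) \ge (1-\eps) s(x)\}$. Monotonicity of $\{g_n\}$ implies $E_n \uparrow$, and the pointwise convergence $g_n \uparrow g \ge s > (1-\eps)s$ on $\{s > 0\}$ ensures $\bigcup_n E_n \supseteq \{s > 0\}$ (and on $\{s=0\}$ the integrand contributes nothing). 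Continuity of measure then gives $\int_{E_n} s \, d\mu \to \int s \, d\mu$, and combining with $\int g_n \, d\mu \ge (1-\eps) \int_{E_n} s \, d\mu$ allows passage to the limit in $n$. Letting $\eps \to 0$ and taking the supremum over all such $s$ yields $\lim_i \int g_i \, d\mu \ge \int g \, d\mu$ by the definition of the Lebesgue integral of a non-negative measurable function.

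The main obstacle I anticipate lies in properly handling extended-real arithmetic during the reduction from $\{g_i\}$ back to $\{f_i\}$, particularly when $\mu(X) = \infty$ and $M < 0$, since the term $M \mu(X) = -\infty$ cannot be formally ``cancelled'' from both sides. The remedy is a short case analysis: when $\mu(X) < \infty$, the subtraction is a finite real number and poses no difficulty; when $\mu(X) = \infty$, one argues directly that both sides of the claimed equality are either both finite (if $f_1$ is $\mu$-integrable, in which case the same reduction works with $f_1$ in place of the constant $M$, giving $g_i := f_i - f_1 \ge 0$) or both equal to $+\infty$ (if the integrals of the $g_i$ already diverge), recovering the equality in the extended reals. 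With this case analysis in place, the reduction to the classical non-negative MCT proves the proposition.
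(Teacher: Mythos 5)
The paper itself offers no proof of this proposition --- it is parked in the ``Helpful Facts'' appendix as a citation-level standard result --- so there is no authorial argument to compare against. Your write-up is the canonical textbook proof: the translation $g_i := f_i - M$ to reduce to the non-negative case, the easy inequality from monotonicity, and the reverse inequality via simple functions $s \le \lim_i g_i$, the sets $E_n = \{g_n \ge (1-\eps)s\}$, and continuity of measure from below. That part is correct, and the reduction is airtight whenever $\mu(X) < \infty$, because then $M\mu(X)$ is a finite number that can be subtracted and restored. Since every invocation of Proposition~\ref{prop:mct} in the paper is with respect to a probability measure ($\prup{u}{P_1}$ in Lemma~\ref{lem:lsc}, and $\mu' \in \m{\mcl{X}}$ in Proposition~\ref{prop:lsc}), your proof covers everything the paper actually needs.

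However, the difficulty you flag in the infinite-measure case is not a technicality you can argue around: the proposition \emph{as stated} is false there, and your proposed dichotomy does not close the gap. Take $X = [0,\infty)$ with Lebesgue measure and $f_i \equiv -1/i$. This sequence is increasing and uniformly bounded below by $-1$, yet $\int \lim_i f_i \, d\mu = 0$ while $\int f_i \, d\mu = -\infty$ for every $i$, so the two sides disagree. This example falls outside both branches of your case analysis: $f_1$ is not $\mu$-integrable, and the conclusion ``both sides equal $+\infty$'' is wrong (one side is $0$, the other $-\infty$). No repair of the argument is possible without strengthening the hypotheses; the correct general statement requires $\int f_1^- \, d\mu < \infty$ (equivalently $\int f_1 \, d\mu > -\infty$), which the uniform lower bound delivers only when $\mu(X) < \infty$. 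The clean fix is to state and prove the proposition for finite measures (or add the quasi-integrability hypothesis on $f_1$) rather than attempting the third paragraph's case analysis as written.
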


\begin{prop}[Convergence in Product Topology]\label{prop:conv_in_prod_topology}
Let $ \{^ix\}_{i=1}^{\infty}$ be a sequence of points of the product space $\prod_{j} X_j$. Then $\{^ix\}_{i=1}^{\infty}$ converges to a point $x \in \prod_{j} X_j$ if and only if the sequence $\{ \pi_j(^ix) \}_{i=1}^{\infty}$ converges to $\pi_j (x) $ for each $j$.
\end{prop}

\begin{prop}[Fatou's Lemma]\label{prop:fatou}
Let  $(X, \mcl{M}, \mu)$ be a measure-space and let $\{ f_i \}_{i=1}^{\infty}$ be a sequence of measurable functions which are uniformly bounded from below. Then,
\begin{align*}
& \liminf_{i\ra\infty} \int f_i(x) \mu (dx)\ge \int \liminf_{i\ra\infty} f_i(x) \mu(dx).
\end{align*}
\end{prop}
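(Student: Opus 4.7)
The plan is to reduce the statement to the standard non-negative form of Fatou's Lemma by shifting, and then derive that from the Monotone Convergence Theorem stated in Proposition \ref{prop:mct}. Let $c \in \mbb{R}$ be a uniform lower bound with $f_i(x) \ge c$ for all $i$ and all $x \in X$, and set $g_i \defeq f_i - c$, which is a non-negative measurable function. Because $\liminf_i g_i = \liminf_i f_i - c$ and shifting the integrands by $c$ commutes with taking integrals (keeping the constant inside to avoid $\infty - \infty$ when $\mu(X) = \infty$), it suffices to prove
\begin{align*}
\int_X \liminf_{i \to \infty} g_i(x)\, \mu(dx) \le \liminf_{i \to \infty} \int_X g_i(x)\, \mu(dx).
\end{align*}

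To this end, I would introduce the auxiliary sequence $h_n(x) \defeq \inf_{i \ge n} g_i(x)$. Each $h_n$ is measurable as a countable infimum of measurable functions, and non-negative because each $g_i \ge 0$. By the definition of $\liminf$, the sequence $\l\{ h_n \r\}_{n=1}^{\infty}$ is pointwise non-decreasing with pointwise limit $\liminf_{i \to \infty} g_i$. Proposition \ref{prop:mct} then gives
\begin{align*}
\int_X \liminf_{i \to \infty} g_i(x)\, \mu(dx) = \lim_{n \to \infty} \int_X h_n(x)\, \mu(dx).
\end{align*}

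Next, since $h_n \le g_i$ pointwise for every $i \ge n$, monotonicity of the integral yields $\int_X h_n\, d\mu \le \int_X g_i\, d\mu$ for all $i \ge n$, hence $\int_X h_n\, d\mu \le \inf_{i \ge n} \int_X g_i\, d\mu$. Letting $n \to \infty$ and combining with the MCT identity above delivers the desired inequality for $\l\{ g_i \r\}$. Undoing the shift by $c$ (again keeping the constant inside the integrand) then recovers the statement for $\l\{ f_i \r\}$.

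The only real obstacle I foresee is the careful handling of the additive constant in the shift when $\mu$ is not finite, which amounts to bookkeeping so that no indeterminate $\infty - \infty$ expressions ever appear; aside from that, the argument is the textbook MCT-based derivation of Fatou's Lemma and requires no ingredients beyond what the paper has already introduced.
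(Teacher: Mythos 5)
The paper states Fatou's Lemma in Appendix B as a standard fact and supplies no proof of its own, so there is nothing to compare against; your derivation --- shift by the uniform lower bound to get non-negative integrands, apply the Monotone Convergence Theorem (Proposition \ref{prop:mct}) to the non-decreasing running infima $h_n=\inf_{i\ge n}g_i$, and bound $\int h_n\,d\mu$ by $\inf_{i\ge n}\int g_i\,d\mu$ --- is the standard textbook argument and is correct. The one caveat you flag, that undoing the shift by $c$ is delicate when $\mu(X)=\infty$, is really a latent issue with the statement itself rather than with your proof (for $c<0$ the integrals $\int f_i\,d\mu$ can a priori be of the form $\infty-\infty$); in the paper's only invocation of this proposition, in Lemma \ref{lem:lsc}, the negative parts of the integrands are summable, so the reduction is harmless there.
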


\begin{prop}[Tychonoff's Theorem]\label{prop:tychonoff}
Product of a collection of compact spaces is compact under the product topology.
\end{prop}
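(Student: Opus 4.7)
The plan is to prove compactness via the Alexander subbasis theorem, which reduces the general compactness check to a more tractable one restricted to a chosen subbasis. Let $X = \prod_{i=1}^{\infty} X_i$ and recall that the standard subbasis for the product topology consists of cylinder sets $\pi_i^{-1}(U)$, where $\pi_i : X \to X_i$ is the $i$-th projection and $U$ is an open subset of $X_i$. Alexander's subbasis theorem then reduces compactness of $X$ to verifying that every open cover of $X$ by such cylinder sets admits a finite subcover.

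I would then argue by contradiction. Suppose $\mcl{C}$ is a cover of $X$ by subbasic cylinder sets with no finite subcover. For each $i \in \mbb{N}$, collect the contributions of $\mcl{C}$ to coordinate $i$ by setting
\[
\mcl{C}_i \;=\; \{\, U \subseteq X_i \text{ open} : \pi_i^{-1}(U) \in \mcl{C} \,\}.
\]
If, for some index $i_0$, the family $\mcl{C}_{i_0}$ were an open cover of $X_{i_0}$, then compactness of $X_{i_0}$ would yield a finite subcover $\{U_1, \ldots, U_m\} \subseteq \mcl{C}_{i_0}$, whereupon $\{\pi_{i_0}^{-1}(U_j)\}_{j=1}^{m} \subseteq \mcl{C}$ would be a finite subcover of $X$ --- contradicting the choice of $\mcl{C}$. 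Hence no $\mcl{C}_i$ covers $X_i$, so for each $i$ there exists (by countable choice, since the index set is $\mbb{N}$) a point $x_i \in X_i \setminus \bigcup \mcl{C}_i$. The resulting tuple $x = (x_i)_{i=1}^{\infty} \in X$ then lies outside every element of $\mcl{C}$, contradicting that $\mcl{C}$ covers $X$. Thus every subbasic open cover admits a finite subcover, and Alexander's theorem gives compactness of $X$.

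The main obstacle is foundational rather than analytical: the argument rests on Alexander's subbasis theorem, whose own proof uses Zorn's lemma. Since this proposition appears in the appendix only as a cited helpful fact, I would present the above as a brief sketch and defer the details to a standard reference. An equally clean alternative that avoids Alexander's theorem uses ultrafilters: any ultrafilter $\mcl{F}$ on $X$ pushes forward under each $\pi_i$ to an ultrafilter on $X_i$, which by compactness of $X_i$ converges to some $x_i \in X_i$; the tuple $(x_i)_{i=1}^{\infty}$ is then a limit of $\mcl{F}$ in the product topology, yielding compactness of $X$ via the characterization of compactness by ultrafilter convergence.
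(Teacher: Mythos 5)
Your argument is correct. Note, however, that the paper states Tychonoff's theorem only as a cited standard fact in its appendix of ``Helpful Facts and Results'' and supplies no proof, so there is no in-paper argument to compare yours against. What you give is the classical Alexander-subbasis proof, and each step checks out: the subbasic sets for the product topology are exactly the cylinders $\pi_i^{-1}(U)$, a finite subcover of any single coordinate family $\mcl{C}_{i_0}$ pulls back to a finite subcover of the product, and the diagonal point $x=(x_i)$ built from the uncovered coordinates witnesses the failure of $\mcl{C}$ to cover. Your foundational remarks are also accurate --- Alexander's theorem (equivalently, the ultrafilter route) rests on the ultrafilter lemma, while the coordinate selection here needs only countable choice. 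One observation worth adding in the context of this paper: the proposition is invoked only for countable products of compact \emph{metrizable} spaces (the factors $\m{\anspace{n};h}$ over the countably many histories $h$, each compact metric by Prokhorov's theorem). In that setting one can bypass Alexander's theorem entirely: the countable product is metrizable (Proposition \ref{prop:metrizability}), so compactness is equivalent to sequential compactness, which follows from a routine diagonal extraction of subsequences coordinate by coordinate. That elementary argument uses no choice principle beyond countable dependent choice and is arguably the proof the authors have in mind, but your more general subbasis proof is equally valid for the statement as written.
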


\begin{prop}[Metrizability of Product Topology on Countable Product of Metric Spaces]\label{prop:metrizability}
Product of countable number of metric spaces, when endowed with the product topology, is metrizable.
\end{prop}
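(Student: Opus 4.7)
The plan is to produce an explicit compatible metric $D$ on the product space $X \defeq \prod_{n=1}^{\infty} X_n$ and then verify both that $D$ is a metric and that the topology it generates coincides with the product topology. Since the countability assumption is built into the statement, one expects the familiar geometric-series construction to work.

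First I would replace each $d_n$ by the bounded metric $\tilde{d}_n(x,y) \defeq d_n(x,y)/(1+d_n(x,y))$, which generates the same topology on $X_n$ (the map $t \mapsto t/(1+t)$ is a homeomorphism of $[0,\infty)$ onto $[0,1)$, and is concave with $f(0)=0$, so it is subadditive and the triangle inequality transfers). I would then set
\begin{align*}
D(x,y) \defeq \sum_{n=1}^{\infty} 2^{-n}\, \tilde{d}_n(x_n, y_n),
\end{align*}
which converges absolutely since each summand is at most $2^{-n}$. Non-negativity, symmetry, and the identity of indiscernibles are immediate, and the triangle inequality follows termwise from the corresponding inequality for each $\tilde{d}_n$.

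Next I would verify that the metric topology $\mathcal{T}_D$ equals the product topology $\mathcal{T}_{\mathrm{prod}}$ by checking both inclusions. For $\mathcal{T}_D \subseteq \mathcal{T}_{\mathrm{prod}}$: given $x \in X$ and $\epsilon > 0$, choose $N$ with $\sum_{n>N} 2^{-n} < \epsilon/2$, then $\delta > 0$ with $\delta \sum_{n=1}^{N} 2^{-n} < \epsilon/2$; the product-basic cylinder $\{y : \tilde{d}_n(x_n, y_n) < \delta \text{ for all } n \le N\}$ is contained in $B_D(x,\epsilon)$. For the reverse inclusion, it suffices to show every subbasic open set $\pi_n^{-1}(V)$ (with $V$ open in $X_n$) is $\mathcal{T}_D$-open: given $x \in \pi_n^{-1}(V)$, pick $r>0$ with $B_{\tilde{d}_n}(x_n, r) \subseteq V$; then $D(x,y) < 2^{-n} r$ forces $\tilde{d}_n(x_n, y_n) < r$, so $B_D(x, 2^{-n} r) \subseteq \pi_n^{-1}(V)$.

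The construction is essentially routine, and the only step requiring any care is the two-sided topological comparison above, where the truncation $N$ and the inner radii must be chosen to align $D$-balls with product-basic neighborhoods. Countability of the index set is crucial here: it is precisely what allows the geometric weights $2^{-n}$ to control the tail of the sum, and the result would fail for uncountable products of nontrivial metric spaces.
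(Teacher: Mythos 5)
Your proof is correct and is the standard geometric-series construction (bounded equivalent metrics $d_n/(1+d_n)$, weighted sum $\sum_n 2^{-n}\tilde{d}_n$, two-sided comparison of basic neighborhoods). The paper states this proposition as a known fact in its appendix and gives no proof of its own, so there is nothing to compare against; your argument fills that gap correctly and with the usual approach.
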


\begin{prop}[Prokhorov's Theorem]\label{prop:prokhorov}
Let $\l( \mcl{X}, \metric{\mcl{X}} \r)$ be a complete separable metric space with distance metric $\metric{\mcl{X}}$ and let $\borel{\mcl{X}}$ denote the Borel $\sigma$-algebra generated by $\metric{\mcl{X}}$. Let $\m{\mcl{X}}$ be the set of all probability measures on $\borel{\mcl{X}}$ and let $\tau$ denote the topology of weak-convergence on $\m{\mcl{X}}$. Then,  
\begin{enumerate}
\item The topological space $ \l(\m{\mcl{X}} , \tau\r)$ is completely-metrizable, i.e., there exists a complete metric $\metric{\m{\mcl{X}}}$ on $ \m{\mcl{X}}$ that induces the same topology as $ \tau $.
\item An arbitrary collection $W \subseteq \m{\mcl{X}}$ of probability measures in $ \m{\mcl{X}}$ is tight iff its closure in $\tau $ is compact (i.e., $W$ is precompact in $\tau$).
\end{enumerate}
\end{prop}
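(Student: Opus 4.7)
The plan is to exhibit an explicit metric on $\m{\mcl{X}}$ that induces the weak topology $\tau$, verify it is complete, and then characterize compact subsets via tightness. The central object I would use is the L\'evy--Prokhorov metric $d_{LP}(\mu,\nu) = \inf\{\epsilon > 0 : \mu(F) \le \nu(F^{\epsilon}) + \epsilon \text{ and } \nu(F) \le \mu(F^{\epsilon}) + \epsilon \text{ for every closed } F \subseteq \mcl{X}\}$, where $F^{\epsilon}$ denotes the open $\epsilon$-neighborhood of $F$. First I would verify $d_{LP}$ is a genuine metric (symmetry is by construction, and the triangle inequality follows from direct estimation on $F^{\epsilon_1+\epsilon_2}$), and then use the Portmanteau characterization of weak convergence, namely $\mu_n \Rightarrow \mu$ iff $\limsup_n \mu_n(F) \le \mu(F)$ for every closed $F$, to show that $d_{LP}$ generates precisely $\tau$.

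For part (2), I would first tackle the harder direction \emph{tight $\Rightarrow$ precompact}. Given a tight family $W$ and a sequence $\{\mu_n\} \subseteq W$, tightness produces compact sets $K_k \subseteq \mcl{X}$ with $\mu_n(K_k^c) < 1/k$ for all $n$ and $k \in \mbb{N}$. On each $K_k$, $\m{K_k}$ sits isometrically inside the dual of $C(K_k)$ via the Riesz representation theorem; by the Banach--Alaoglu theorem the unit ball of this dual is weak-$*$ compact, which yields a weakly convergent subsequence of restrictions. A standard diagonal argument across $k$ then produces a single subsequence whose restrictions to every $K_k$ converge to a sub-probability $\nu_k$ on $K_k$; I would verify the $\nu_k$ are consistent and extend to a genuine Borel probability measure $\mu$ on $\mcl{X}$ (using tightness to recover total mass $1$), and that the original subsequence converges weakly to $\mu$. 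For the converse \emph{precompact $\Rightarrow$ tight}, I would argue by contradiction: if $W$ is not tight, construct $\mu_n \in W$ and a fixed $\epsilon > 0$ such that no compact set absorbs $1-\epsilon$ of the mass of $\mu_n$ uniformly; by precompactness pass to a subsequence $\mu_{n_j} \Rightarrow \mu$, then use inner regularity of $\mu$ on the Polish space $\mcl{X}$ to pick a compact $K$ with $\mu(K) > 1 - \epsilon/2$, and apply Portmanteau to $K^c$ to derive a contradiction. Completeness in part (1) would then follow by taking a $d_{LP}$-Cauchy sequence, showing it is tight (using separability and completeness of $\mcl{X}$ to cover each $\{\mu_n\}$ by a compact set carrying almost all mass uniformly in $n$), invoking part (2) to extract a weakly convergent subsequence, and arguing that Cauchyness upgrades subsequential convergence to full convergence.

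The main obstacle I anticipate is the gluing step in the tight $\Rightarrow$ precompact direction: one must assemble the weak-$*$ limits defined on each $K_k$ into a single Borel probability measure on $\mcl{X}$ without losing total mass, which requires both careful bookkeeping across the diagonal and a delicate use of inner regularity on Polish spaces. Establishing that the total mass of the glued limit is $1$ (rather than merely $\le 1$) is precisely where the tightness hypothesis is essential, and getting this right is the heart of Prokhorov's theorem; the remaining arguments are largely standard functional-analytic machinery once this construction is in place.
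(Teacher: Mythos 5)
The paper states Proposition~\ref{prop:prokhorov} as a classical auxiliary fact and gives no proof of it, so there is no in-paper argument to compare against; I will assess your sketch on its own terms. The overall route — metrize $\tau$ by the L\'evy--Prokhorov metric, prove tight $\Rightarrow$ precompact via Riesz representation, Banach--Alaoglu on each $K_k$ and a diagonal/gluing argument, and reduce completeness of $d_{LP}$ to tightness of Cauchy sequences — is the standard one, and you correctly identify the mass-gluing step as the crux of that direction.

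The genuine gap is in your converse, precompact $\Rightarrow$ tight. After extracting $\mu_{n_j} \Rightarrow \mu$ and choosing a compact $K$ with $\mu(K) > 1-\epsilon/2$, Portmanteau gives $\liminf_j \mu_{n_j}(K^c) \ge \mu(K^c)$ and $\limsup_j \mu_{n_j}(K) \le \mu(K)$ — both inequalities point the wrong way for concluding that the $\mu_{n_j}$ concentrate near $K$. The usual rescue (replace $K$ by an open superset with compact closure) is unavailable because a Polish space need not be locally compact. The contradiction setup is also shaky: ``for every compact set some member of $W$ loses mass $\epsilon$'' does not by itself yield one sequence that can be tested against a single compact set chosen after the fact, and a general Polish space admits no exhaustion by compacts to index over. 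The standard correct argument instead uses separability: cover $\mcl{X}$ by balls $B(x_i,1/n)$ and show that for each $n$ and $\delta>0$ some finite union $G_n=\bigcup_{i\le m_n}B(x_i,1/n)$ satisfies $\mu(G_n)>1-\delta 2^{-n}$ for all $\mu\in W$; if not, one obtains $\mu_m$ with $\mu_m(G_{n,m})\le 1-\delta 2^{-n}$ where the open sets $G_{n,m}$ increase to $\mcl{X}$, and the open-set Portmanteau inequality applied to a weak limit point now runs in the right direction and forces that limit to have total mass at most $1-\delta 2^{-n}$, a contradiction. Then $K=\bigcap_n \overline{G_n}$ is closed and totally bounded, hence compact by completeness of $\mcl{X}$, with $\mu(K^c)<\delta$ uniformly on $W$. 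With that repair (the same covering lemma is also what you need to show that $d_{LP}$-Cauchy sequences are tight), the remainder of your outline goes through.
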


\begin{prop}[Hyperplane Separation Theorem]\label{prop:separation_theorem}
Let $M$ be a non-empty convex subset of 
$\mbb{R}^n$. If $x_0 \in \mbb{R}^n$ does not belong to $M$, there exists $\rho \in \mbb{R}^n$ such that
\begin{align*}
\rho \neq 0 \text { and } \inf_{x \in M} \dotp{p}{x} \geq \dotp{p}{x_0}.
\end{align*}
\end{prop}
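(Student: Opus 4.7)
The plan is to reduce to a Euclidean-projection argument on the closure $\bar{M}$, then handle the case where $x_0$ lies on the boundary of $\bar{M}$ via a limiting argument on separators obtained from nearby external points. Since $\bar{M}$ is closed and convex and $M \subseteq \bar{M}$, any $\rho$ achieving $\inf_{x \in \bar{M}} \dotp{\rho}{x} \ge \dotp{\rho}{x_0}$ will a fortiori satisfy the stated inequality on $M$. I split into two cases based on whether $x_0 \in \bar{M}$.

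\textbf{Case 1 ($x_0 \notin \bar{M}$):} I would invoke the classical closest-point projection. The function $f(x) = \|x - x_0\|^2$ admits a minimizer $x_\star \in \bar{M}$ (extract a bounded minimizing sequence and use closedness of $\bar{M}$). Setting $\rho := x_\star - x_0 \neq 0$, convexity of $\bar{M}$ yields $x_\star + t(x - x_\star) \in \bar{M}$ for $x \in \bar{M}$, $t \in [0,1]$. Minimality of $x_\star$ then gives $\|x_\star + t(x - x_\star) - x_0\|^2 \ge \|x_\star - x_0\|^2$; expanding, dividing by $t$, and letting $t \to 0^+$ produces $\dotp{\rho}{x - x_\star} \ge 0$, i.e., $\dotp{\rho}{x} \ge \dotp{\rho}{x_0} + \|\rho\|^2 > \dotp{\rho}{x_0}$, which is even a strict separation.

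\textbf{Case 2 ($x_0 \in \bar{M} \setminus M$):} The idea is to approximate $x_0$ by external points $\{y_k\} \subseteq \mbb{R}^n \setminus \bar{M}$ with $y_k \to x_0$, apply Case 1 to each $y_k$, and extract a subsequential limit of the normalized separators. To produce $\{y_k\}$: if $\bar{M}$ has empty interior in $\mbb{R}^n$, then $\bar{M}$ lies in a proper affine subspace $V \ni x_0$ and one can take $y_k = x_0 + k^{-1} v$ for any nonzero $v$ orthogonal to the linear part of $V$; otherwise, the standard convex-analytic identity $\mathrm{int}(\bar{M}) = \mathrm{int}(M)$, combined with $x_0 \notin M$, forces $x_0 \in \partial \bar{M}$, so every neighborhood of $x_0$ meets $\mbb{R}^n \setminus \bar{M}$. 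For each $y_k$, Case 1 yields a separator which I normalize to $\|\rho_k\| = 1$, giving $\dotp{\rho_k}{x} \ge \dotp{\rho_k}{y_k}$ for all $x \in \bar{M}$. Compactness of the unit sphere in $\mbb{R}^n$ produces a subsequence $\rho_{k_j} \to \rho$ with $\|\rho\| = 1$, and passing to the limit in the inequality (using $y_{k_j} \to x_0$) yields $\dotp{\rho}{x} \ge \dotp{\rho}{x_0}$ for every $x \in \bar{M}$.

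The main obstacle is Case 2, specifically the topological justification that an external approximating sequence $\{y_k\}$ exists. This hinges on the convex-analytic fact that the (relative) interior of a convex set coincides with the (relative) interior of its closure, which is what allows $x_0 \notin M$ to rule out $x_0$ being an interior point of $\bar{M}$. Once that is in place, the projection step of Case 1 is routine and the subsequence extraction in Case 2 is a standard compactness argument on the unit sphere.
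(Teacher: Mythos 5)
The paper gives no proof of Proposition~\ref{prop:separation_theorem}: it is stated in the appendix of background facts and simply invoked (in Lemma~\ref{lem:lem8.2:aubin:modified}) as a standard result of convex analysis, so there is no in-paper argument to compare against. Judged on its own, your proof is correct and is the classical one. Case~1 is the closest-point projection onto $\overline{M}$, which even yields strict separation, $\dotp{\rho}{x} \ge \dotp{\rho}{x_0} + \|\rho\|^2$; the algebra there is right. Case~2 correctly reduces the situation $x_0 \in \overline{M}\setminus M$ to a limit of normalized separators at exterior points $y_k \to x_0$, using compactness of the unit sphere, and the passage to the limit is sound since both $\rho_{k_j}$ and $y_{k_j}$ converge. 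You also correctly identify the one nontrivial ingredient: to know that $x_0$ can be approximated from outside $\overline{M}$ you need $x_0 \notin \mathrm{int}(\overline{M})$, and since $x_0 \notin M$ this is exactly the convex-analytic identity $\mathrm{int}(\overline{M}) = \mathrm{int}(M)$ (equivalently, the line-segment/relative-interior principle); convexity is what rules out the pathology of a point interior to $\overline{M}$ but outside $M$. Citing that identity is legitimate at the level of rigor of the paper (which cites the whole separation theorem without proof), though if you wanted a fully self-contained argument you would prove it via the line-segment lemma, or handle the degenerate subcase as you do by noting that when $\overline{M}$ has empty interior it sits in a proper affine subspace and a normal vector to that subspace already separates. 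One cosmetic remark: the statement in the paper writes $\rho$ in the text and $p$ in the displayed inequality; these are the same vector, and your proof supplies it.
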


\begin{prop}[Integral of Bounded-from-Below function with respect to Convex Combination of Non-negative Measures]\label{prop:integral_linearity}
Let $\l(X, \mcl{M}\r)$ be a measure-space. Let $f : X \ra \mbb{R} \cup \{ \infty \}$ be a measurable function that is bounded from below, and let $\mu, \nu$ be two non-negative measures on $\mcl{M}$. Then, for any $\theta \in [0,1]$,
\begin{align*}
&\int f(x) \l(\theta \mu + (1-\theta) \nu \r)(dx) \\
&\hspace{10pt} = \theta \int f(x) \mu(dx) + (1-\theta) \int f(x)\nu(dx). 
\end{align*}
\end{prop}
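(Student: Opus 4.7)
The plan is to establish the identity by the standard approximation scheme from measure theory: indicator functions $\ra$ non-negative simple functions $\ra$ non-negative measurable functions $\ra$ functions bounded from below. First I would verify that $\lambda \defeq \theta \mu + (1-\theta)\nu$, defined pointwise on $\mcl{M}$ by $\lambda(A) = \theta \mu(A) + (1-\theta)\nu(A)$, is itself a non-negative measure on $\mcl{M}$; countable additivity of $\lambda$ follows immediately from that of $\mu$ and $\nu$ together with the fact that convergent non-negative series may be scaled and summed term-wise.

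Next I would check the identity for $f = \11_A$ with $A \in \mcl{M}$: both sides equal $\theta \mu(A) + (1-\theta)\nu(A)$ directly from the definition of $\lambda$. Extending to a non-negative simple function $\phi = \sum_{i=1}^{m} c_i \11_{A_i}$ with $c_i \ge 0$ and $A_i \in \mcl{M}$ is then a one-line consequence of the linearity of the integral in the integrand applied separately to each of the three integrals against $\mu$, $\nu$, and $\lambda$. To handle a general non-negative measurable $f : X \ra [0, \infty]$, I would approximate $f$ from below by the canonical increasing sequence of non-negative simple functions $\phi_k \uparrow f$ and apply the Monotone Convergence Theorem (Proposition \ref{prop:mct}) separately to each of the three measures, then pass to the limit in the simple-function identity to conclude.

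Finally, for an $f$ bounded from below by some $\udl{m} \in \mbb{R}$, I would write $f = g + \udl{m}$ where $g \defeq f - \udl{m} \ge 0$ is non-negative measurable. Applying the non-negative case to $g$ and observing that the additive constant $\udl{m}$ contributes $\udl{m}\,\lambda(X)$ on the left-hand side and $\theta\,\udl{m}\,\mu(X) + (1-\theta)\,\udl{m}\,\nu(X) = \udl{m}\,\lambda(X)$ on the right-hand side, the two additive adjustments cancel and the result transfers from $g$ to $f$.

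I do not foresee any serious obstacle: each step is textbook and the proof is essentially a bookkeeping exercise. The only subtlety is avoiding expressions of the form $\infty - \infty$ in the final step when $\udl{m} < 0$ and the total masses $\mu(X)$, $\nu(X)$ are infinite; however, in the intended application (Lemma \ref{lem:lsc2}, where $\mu$ and $\nu$ are probability measures on $\uspace$) these masses equal $1$, so no such issue arises and the splitting is unambiguous. If full generality is desired, one could instead invoke $\sigma$-finiteness, or handle the positive and negative parts $f^+, f^-$ directly after noting that $f^- \le |\udl{m}|$.
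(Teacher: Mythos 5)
Your proof is correct. Note that the paper itself offers no proof of Proposition \ref{prop:integral_linearity}: it is listed in Appendix~\ref{sec:appendix:helpful_facts} as a standard ``helpful fact'' (alongside the Monotone Convergence Theorem, Tonneli's theorem, etc.) and is simply invoked in the proof of Theorem \ref{thm:strongduality}(c) to get convexity of $\ov{L}$ in $\mu$, so there is no in-paper argument to compare against. Your four-step approximation scheme (indicators, simple functions, Monotone Convergence via Proposition \ref{prop:mct} for non-negative $f$, then shifting by the lower bound) is the standard way to fill this in, and you correctly identify the only real delicacy: when $\udl{m}<0$ and $\mu(X)$ or $\nu(X)$ is infinite, the shift argument produces $\infty-\infty$ and indeed the integrals themselves may be ill-defined, so the proposition should be read either for finite (in particular probability) measures or with the $f^{+},f^{-}$ bookkeeping you sketch. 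Since the proposition is applied in the paper only with $\mu,\nu \in \m{\uspacen{n}}$ (probability measures, as in Lemma \ref{lem:lsc2} and the convexity step of Theorem \ref{thm:strongduality}(c)), your restriction is exactly the setting that matters, and the proof goes through without further modification.
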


\begin{prop}[Behavior of Integrals of a Bounded-from-Below and Lower Semi-Continuous Function]\label{prop:lsc}
Let $(\mcl{X}, \metric{\mcl{X}})$ be a 
complete separable metric space with distance metric $\metric{\mcl{X}}$ and let $\borel{\mcl{X}}$ denote the Borel $\sigma$-algebra generated by $\metric{\mcl{X}}$. Let $\l( \m{\mcl{X}} , \metric{\m{\mcl{X}}} \r)$ be the complete metric space of all probability measures on $\borel{\mcl{X}}$ with the topology of weak-convergence.\footnote{Prokhorov's theorem (see Proposition \ref{prop:prokhorov}) ensures completeness and metrizability of $\m{\mcl{X}}$.} Let $\mu \in \m{\mcl{X}}$ and let $f : \mcl{X} \ra \mbb{R} \cup \l\{ \infty\r\} $ be a function that is lower semi-continuous $\mu$-amost-everywhere\footnote{Lower semi-continuity of $f$ ensures that it is measurable.} and is bounded from below. Then, the function
\begin{align*}
H : \m{\mcl{X}} \mapsto \mbb{R} \cup \l\{ \infty \r\},\  
H(\mu') \defeq \int f(x) \mu'(dx)
\end{align*}
is lower semi-continuous at $\mu$. In particular, if $f$ is point-wise lower semi-continuous, then $H$ is also point-wise lower semi-continuous (on $\m{\mcl{X}}$).
\end{prop}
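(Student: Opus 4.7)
The plan is to prove lower semi-continuity of $H$ at $\mu$ by the standard two-stage argument: first establish the conclusion for bounded lower semi-continuous integrands directly from the bounded-continuous characterization of weak convergence, then extend to integrands that are bounded from below but possibly unbounded above by an upward truncation and the Monotone Convergence Theorem (Proposition \ref{prop:mct}).

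Concretely, I would fix a sequence $\{\mu_n\}_{n\ge 1}$ in $\m{\mcl{X}}$ with $\mu_n$ converging weakly to $\mu$ and, after adding a constant (absorbed by $\mu_n(\mcl{X})=1$), reduce to $f \ge 0$. Introduce the truncations $f_k \defeq f \wedge k$, each of which is bounded, non-negative, and still lower semi-continuous (its sublevel sets equal those of $f$ below level $k$ and equal all of $\mcl{X}$ at or above $k$, hence closed). On the metric space $\mcl{X}$, each bounded LSC $f_k$ is the pointwise increasing limit of the $m$-Lipschitz functions $g_{k,m}(x) \defeq \inf_{y\in\mcl{X}} \bigl( f_k(y) + m\, \metric{\mcl{X}}(x,y) \bigr)$. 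Since each $g_{k,m}$ is bounded and continuous, weak convergence gives $\int g_{k,m}\, d\mu_n \to \int g_{k,m}\, d\mu$; sending $m \to \infty$ and invoking Proposition \ref{prop:mct} on the right yields $\liminf_n \int f_k\, d\mu_n \ge \int f_k\, d\mu$. Since $f \ge f_k$ pointwise, this gives $\liminf_n \int f\, d\mu_n \ge \int f_k\, d\mu$, and a second application of Proposition \ref{prop:mct} with $f_k \uparrow f$ yields $\liminf_n H(\mu_n) \ge H(\mu)$.

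The main expected obstacle is accommodating the weaker hypothesis that $f$ is lower semi-continuous only $\mu$-almost everywhere. I would handle this by passing to the lower semi-continuous envelope $f_\ast(x) \defeq \lim_{\delta\downarrow 0} \inf_{y \in B(x,\delta)} f(y)$, which is LSC everywhere, satisfies $f_\ast \le f$ pointwise, and agrees with $f$ at every point where $f$ is lower semi-continuous. By hypothesis the disagreement set is $\mu$-null, so $\int f_\ast\, d\mu = \int f\, d\mu$; applying the pointwise-LSC conclusion already obtained to $f_\ast$ and then using $f \ge f_\ast$ on the $\mu_n$-integrals yields $\liminf_n \int f\, d\mu_n \ge \liminf_n \int f_\ast\, d\mu_n \ge \int f_\ast\, d\mu = \int f\, d\mu$, which closes out the $\mu$-a.e.\ case. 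The ``in particular'' clause is then immediate, since the first stage establishes exactly the pointwise-LSC statement at an arbitrary $\mu$.
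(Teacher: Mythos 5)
Your proof is correct, and its skeleton is the same as the paper's: both arguments replace $f$ by its lower semi-continuous envelope to handle the $\mu$-a.e.\ hypothesis (using $f_\ast \le f$ with equality $\mu$-a.e., so $\int f_\ast\,d\mu = \int f\,d\mu$ while $\int f\,d\mu_n \ge \int f_\ast\,d\mu_n$), both approximate the LSC integrand from below by the Lipschitz inf-convolutions $\inf_y\{f(y)+m\,\metric{\mcl{X}}(x,y)\}$, and both close with the Monotone Convergence Theorem. The one genuine divergence is your interposed truncation $f_k = f \wedge k$ before forming the Lipschitz approximants. The paper instead applies the inf-convolution construction (its Proposition \ref{prop:lsc3}) directly to the envelope $f'$, obtaining approximants that are Lipschitz and bounded below but not necessarily bounded \emph{above}, and then asserts that $\mu' \mapsto \int g_i\,d\mu'$ is continuous in the weak topology; that continuity is only automatic for \emph{bounded} continuous test functions, so the paper's route leans on an unstated integrability point there (harmless for the application in the paper, where the relevant spaces are compact, but a gap as stated in full generality). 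Your truncation makes every $g_{k,m}$ take values in $[0,k]$, so the weak-convergence step is immediate, at the cost of one extra monotone limit in $k$. In short: same method, but your version is the more careful one on the step where weak convergence is actually invoked.
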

\begin{proof}
Define $f' : \mcl{X} \ra \mbb{R} \cup \{\infty \}$ as $f'(x) \defeq f(x) \wedge \liminf_{y\ra x} f(y)$. Then, $f'$ minorizes $f$\footnote{That is, $f'(x) \le f(x)$.}, is lower semi-continuous, and coincides with $f$ at $x$ if and only if $f$ is lower semi-continuous at $x$. Also, $f'$ is bounded from below (since $f$ is). By Proposition \ref{prop:lsc3}, $f'$ can be written as the point-wise limit of increasing sequence of uniformly bounded-from-below continuous functions from $\mcl{X}$ into $\mbb{R} \cup \{ \infty \}$, say $\l\{ g_i 
\r\}_{i=1}^{\infty} $, i.e., $f'(x) = \lim_{i\ra \infty} g_i(x)$. Then, for every $\mu' \in \m{\mcl{X}}$,
\begin{align*}
\int f'(x)\mu'(dx) = \int \lim_{i\ra \infty} g_i(x) \mu'(dx) = \lim_{i\ra \infty} \int g_i(x)\mu'(dx),
\end{align*}
where the last equality follows from the Monotone Convergence Theorem (see Proposition \ref{prop:mct}). The above equality shows that the function $H' : \m{\mcl{X}} \ra \mbb{R} \cup \{ \infty\}$ such that $H'(\mu') = \int f'(x) \mu'(dx)$, is the point-wise limit of an increasing sequence of uniformly bounded-from-below continuous functions. Therefore, by Proposition \ref{prop:lsc3}, $H'$ is lower semi-continuous. Now, if $f$ is lower semi-continuous $\mu$-almost-everywhere, then $f = f'$ $\mu-$almost-everywhere. This gives,
\begin{align*}
H(\mu) &= \int f(x) \mu(dx) \\
&= \int f'(x) \mu(dx) \\
&\labelrel{=}{eqr:lsc:H2islsc} \liminf_{\mu'\ra\mu} H'(\mu') \\
&\labelrel{\le}{eqr:lsc:H2minorizesH} \liminf_{\mu'\ra\mu} H(\mu'),
\end{align*}
Here, \eqref{eqr:lsc:H2islsc} uses lower semi-continuity of $H'$ and \eqref{eqr:lsc:H2minorizesH} follows from the fact that $H'$ minorizes $H$ (since $f'$ minorizes $f$). The inequality $H(\mu) \le \liminf_{\mu'\ra\mu} H(\mu')$ is the definition of lower semi-continuity at $\mu$. 
\end{proof}

\begin{prop}[Equivalent Characterization of a Bounded-from-Below Lower Semi-Continuous Function]\label{prop:lsc3}
Let $\l( \mcl{X}, \metric{\mcl{X}} \r)$ be a metric space. Then, a function $f : \mcl{X} \ra \mbb{R} \cup \{ \infty \}$ is a bounded-from-below lower semi-continuous function if and only if it can be written as the point-wise limit of an increasing sequence of uniformly bounded-from-below continuous functions from $\mcl{X}$ into $\mbb{R} \cup \{ \infty \}$. 
\end{prop}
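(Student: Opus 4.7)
The plan is to prove the two directions separately, with the forward direction $(\Rightarrow)$ being the substantial one.

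For the easy direction $(\Leftarrow)$, suppose $f$ is the pointwise limit of an increasing sequence $\{g_i\}_{i=1}^{\infty}$ of continuous functions that are uniformly bounded below by some $M \in \mbb{R}$. Then $f = \sup_i g_i$, so $f \ge M$ everywhere. For lower semi-continuity at any $x_0 \in \mcl{X}$ and any finite $\alpha < f(x_0)$, pick $i_0$ with $g_{i_0}(x_0) > \alpha$; this is possible whether $f(x_0)$ is finite or $+\infty$. Continuity of $g_{i_0}$ gives a neighborhood $U$ of $x_0$ on which $g_{i_0}(x) > \alpha$, and since $f \ge g_{i_0}$, also $f(x) > \alpha$ on $U$. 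Hence $\liminf_{x\to x_0} f(x) \ge \alpha$, and letting $\alpha \nearrow f(x_0)$ gives the desired inequality.

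For the hard direction $(\Rightarrow)$, suppose $f$ is lower semi-continuous and bounded below by $M \in \mbb{R}$. The plan is to use the Moreau--Yosida regularization (infimal convolution with the metric):
\begin{align*}
g_i(x) := \inf_{y \in \mcl{X}}\bigl\{ f(y) + i \cdot \metric{\mcl{X}}(x, y) \bigr\}, \quad i \in \mbb{N}.
\end{align*}
Four properties are routine to verify: (a) $g_i$ is $i$-Lipschitz, hence continuous, by applying the triangle inequality to $\metric{\mcl{X}}$ inside the infimum; (b) $g_i \le g_{i+1}$ because the penalty grows in $i$; (c) $g_i \ge M$ because $f \ge M$ and the metric is non-negative; (d) $g_i \le f$ by taking $y = x$ in the infimum. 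The $g_i$ take values in $\mbb{R}\cup\{\infty\}$ by (c), and are in fact real-valued everywhere as soon as there exists a single $y^\star$ with $f(y^\star) < \infty$ (since $g_i(x) \le f(y^\star) + i\cdot \metric{\mcl{X}}(x,y^\star) < \infty$); the degenerate case $f\equiv \infty$ can be handled separately by the constant sequence $g_i \equiv i$.

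The main obstacle, and the only step that actually uses lower semi-continuity of $f$, is the pointwise convergence $g_i(x) \uparrow f(x)$. For each $i$, choose a near-minimizer $y_i$ with $f(y_i) + i\cdot \metric{\mcl{X}}(x, y_i) \le g_i(x) + 1/i$. Combining with $f(y_i) \ge M$ yields the bound $\metric{\mcl{X}}(x, y_i) \le (g_i(x) - M + 1/i)/i$. When $f(x) < \infty$, property (d) forces $g_i(x) \le f(x)$, so $\metric{\mcl{X}}(x,y_i) \to 0$ and $y_i \to x$; lower semi-continuity then gives $f(x) \le \liminf_i f(y_i) \le \liminf_i (g_i(x) + 1/i) = \lim_i g_i(x)$, which combined with (d) yields equality. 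When $f(x) = \infty$, I would argue by contradiction: if $L := \lim_i g_i(x) < \infty$, the same displayed bound still forces $y_i \to x$, while $f(y_i) \le L + 1/i - i\cdot \metric{\mcl{X}}(x, y_i) \le L + 1$, contradicting $\liminf_i f(y_i) \ge f(x) = \infty$. This completes the sketch.
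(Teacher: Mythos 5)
Your proposal is correct and follows essentially the same route as the paper: the forward direction uses the identical Lipschitz (Moreau--Yosida) regularization $g_i(x)=\inf_y\{f(y)+i\,\metric{\mcl{X}}(x,y)\}$ with the same four verifications, and the backward direction is the same monotone-approximation argument. The only cosmetic difference is that you establish pointwise convergence via near-minimizers $y_i \to x$ (and explicitly treat the $f(x)=\infty$ and $f\equiv\infty$ cases, which the paper glosses over), whereas the paper splits the infimum over $\metric{\mcl{X}}(x_0,y)<\delta$ versus $\ge\delta$; both are standard and equivalent in substance.
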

\begin{proof}
\textbf{Necessity}: Define $f_n : \mcl{X} \ra \mbb{R} \cup \{ \infty \}$ as follows:
\begin{align*}
f_n\l( x \r) &\defeq \inf_{y\in\mcl{X}} \l\{ f(y) + n \metric{\mcl{X}} \l(x, y\r) \r\}.
\end{align*}
\begin{enumerate}
\item \textit{Increasing}: 
\begin{align*}
f_{n+1}\l( x \r) = \inf_{y\in\mcl{X}} \l\{ f(y) + (n+1)\metric{\mcl{X}}\l( x,y\r) \r\} \ge f_n(x).
\end{align*}
\item \textit{Uniformly Bounded-from-Below}: Since $f_n\l(x \r) \ge \inf_{y\in\mcl{X}} \l\{ f(y) \r\}$ and $f$ is bounded-from-below, the functions $\l\{ f_n\r\}_{n=1}^{\infty}$ are uniformly bounded-from-below.
\item \textit{Continuity}: By triangle-inequality,
\begin{align*}
f(y) + n\metric{\mcl{X}}\l(y, z\r) \le
f(y) + n\metric{\mcl{X}}\l(y, w\r) +  n\metric{\mcl{X}}\l(w, z\r),
\end{align*}
and therefore, taking the infimum over $y$ on both sides gives $ f_n\l( z \r) - f_n\l( w \r) \le n\metric{\mcl{X}}\l( w, z\r) $. Similarly, we can get $ f_n\l( w \r) - f_n\l( z \r) \le n\metric{\mcl{X}}\l( w, z\r) $, and so
\begin{align*}
|f_n\l( z \r) - f_n\l( w \r)| \le n \metric{\mcl{X}} \l(w, z\r).
\end{align*}
The above relation shows that $f_n$ is Lipschitz and thus continuous.
\item \textit{Point-wise Convergence to $f$}: Fix $x_0 \in \mcl{X}$ and $\eps>0$. We would like to show that there exists a positive integer $n' = n'(x_0, \eps)$ such that, for all $ n \ge n'$, $| f_n\l(x_0\r) - f\l(x_0\r) | < \eps$. Since $f$ is lower semi-continuous at $x_0$, there exists $\delta = \delta(x_0, \eps) > 0$ such that
\begin{align*}
\metric{\mcl{X}}\l( x_0, y\r) < \delta \implies f(y) >  f(x_0) -\eps.\numberthis\label{eq:lsc2:implication}
\end{align*}
Since $f$ is bounded-from-below (and $\delta>0$), there exists a positive integer $n'=n'(\delta(x_0,\eps))$ such that
\begin{align*}
&\metric{\mcl{X}}\l( x_0, y\r) \ge \delta\\
&\hspace{0pt} \implies \forall\  n\ge n', f(y) + n\metric{\mcl{X}}\l( x_0, y\r) > f(x_0)\\
&\hspace{0pt} \implies \forall\  n\ge n', \\
&\hspace{40pt} \inf_{\metric{\mcl{X}}\l( x_0, y\r) \ge \delta } \l\{ f(y) + n\metric{\mcl{X}}(x_0, y) \r\}\ge f\l(x_0\r).
\end{align*}
So, for all $n\ge n'$, we have
\begin{align*}
f(x_0) \ge f_n\l( x_0 \r) &= \inf_{\metric{\mcl{X}}\l( x_0, y\r) \le \delta } \l\{ f(y) + n\metric{\mcl{X}}(x_0, y) \r\}\\
&\ge\inf_{\metric{\mcl{X}}\l( x_0, y\r) \le \delta } \l\{ f(y) \r\}\\
&\labelrel{>}{eqr:lsc2:1}\inf_{\metric{\mcl{X}}\l( x_0, y\r) \le \delta } \l\{ f(x_0) - \eps \r\}\\
&=f(x_0) - \eps.
\end{align*}
where \eqref{eqr:lsc2:1} uses \eqref{eq:lsc2:implication}.
\end{enumerate}
\textbf{Sufficiency}: Let $\l\{ f_n \r\}_{n=1}^{\infty} $ be an increasing sequence of uniformly bounded-from-below continuous functions from $\mcl{X}$ into $\mbb{R} \cup \l\{ \infty \r\}$. Since the sequence is monotonic, it has a point-wise-limit $f : \mcl{X} \ra \mbb{R} \cup \l\{ \infty \r\}$ which is bounded-from-below because all the functions in the sequence are uniformly bounded-from-below. We need to show that $f$ is lower semi-continuous. 

Fix $x_0 \in \mcl{X}$ and $\eps>0$. We would like to show that there exists $\delta = \delta(x_0,\eps)>0$ such that $\metric{\mcl{X}}\l( x_0, y\r) < \delta \implies f(y) >  f(x_0) -\eps $. Since  $\l\{ f_n \r\}_{n=1}^{\infty} $ is increasing (and converges point-wise to $f$), there exists a positive integer $n'=n'(x_0, \eps)$ such that, for all $n\ge n'$, $f(x_0) \ge f_n(x_0) \ge f(x_0) - \frac{\eps}{2}$. Since $f_{n'}$ is lower semi-continuous, there exists $\delta=\delta(n'(x_0, \eps)) > 0$ such that $\metric{\mcl{X}}\l( x_0, y\r)<\delta \implies f(y) \ge f_{n'}(y) > f_{n'}(x_0) - \frac{\eps}{2} \ge f(x_0) - \eps$. 
\end{proof}

\section{A Minimax Theorem for Functions with Positive Infinity
}\label{sec:appendix:minimax}

\begin{prop}[A Minimax Theorem For Functions with Positive Infinity]\label{prop:sionminimax}
Let $\mcl{X}$ and $\mcl{Y}$ be convex topological spaces where $\mcl{X}$ is also compact. Consider a function $f : \mcl{X} \times \mcl{Y} \ra \mbb{R} \cup \{ \infty \} $ such that
\begin{enumerate}
\item for each $y \in \mcl{Y}$, $f\l(\cdot, y \r)$ is convex and lower semi-continuous.
\item for each $x \in \mcl{X}$, $f\l(x, \cdot \r)$ is concave.
\item If $f (x, y) = \infty$, then $f(x, y') = \infty$ for all $y'\in\mcl{Y}$.
\end{enumerate}
Then, there exists $x^\star \in \mcl{X}$ such that
\begin{align*}
\sup_{y\in \mcl{Y}} f\l( x^\star, y \r) &=
\inf_{x \in \mcl{X}} \sup_{y \in \mcl{Y}} f\l( x, y \r)\\
&=\sup_{y \in \mcl{Y}} \inf_{x \in \mcl{X}} f(x, y).
\end{align*}
\end{prop}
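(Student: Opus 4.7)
The proof proceeds by a truncation argument that reduces the statement for the $+\infty$-valued $f$ to a classical Sion-type minimax theorem for a real-valued function. First, since $f(\cdot, y)$ is lower semi-continuous for each $y \in \mcl{Y}$, the function $g(x) \defeq \sup_{y \in \mcl{Y}} f(x, y)$ is lower semi-continuous on $\mcl{X}$ (a pointwise supremum of lower semi-continuous functions is lower semi-continuous). Compactness of $\mcl{X}$ then yields an $x^\star \in \mcl{X}$ attaining $\inf_{x} g(x)$, which establishes the first equality of the conclusion. The weak-duality inequality $\sup_{y} \inf_{x} f(x, y) \le \inf_{x} \sup_{y} f(x, y) =: v^\star$ is immediate, so only the reverse direction requires work.

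If $v^\star = \infty$, then $g \equiv \infty$ on $\mcl{X}$, and condition (3) forces $f \equiv \infty$ on $\mcl{X} \times \mcl{Y}$, so both sides trivially equal $\infty$. Otherwise, fix $M > v^\star$ and define the truncation $f_M(x, y) \defeq \min\{f(x, y), M\}$, which takes values in $\mbb{R}$. One checks that $f_M(\cdot, y)$ is lower semi-continuous with convex sublevel sets (hence quasi-convex in $x$), since for $\alpha < M$ the sublevel set of $f_M(\cdot, y)$ coincides with that of $f(\cdot, y)$, and for $\alpha \ge M$ it is all of $\mcl{X}$. Moreover, $f_M(x, \cdot)$ is concave, because the minimum of two concave functions is concave via the elementary inequality $\min\{\theta a_1 + (1 - \theta) b_1,\, \theta a_2 + (1 - \theta) b_2\} \ge \theta \min\{a_1, a_2\} + (1 - \theta) \min\{b_1, b_2\}$. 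Invoking the classical Sion minimax theorem on $f_M$ gives $\inf_{x} \sup_{y} f_M(x, y) = \sup_{y} \inf_{x} f_M(x, y)$. Using the identities $\sup_{y} \min\{f(x, y), M\} = \min\{\sup_{y} f(x, y), M\}$ and $\inf_{x} \min\{f(x, y), M\} = \min\{\inf_{x} f(x, y), M\}$ together with $v^\star < M$, this equality becomes $v^\star = \min\{\sup_{y} \inf_{x} f(x, y), M\}$, and since $\sup_{y} \inf_{x} f \le v^\star < M$, the last quantity equals $\sup_{y} \inf_{x} f(x, y)$, closing the loop.

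The main obstacle is invoking the classical Sion theorem for $f_M$: standard formulations require upper semi-continuity of $f_M(x, \cdot)$ in $y$, which is not guaranteed by the hypotheses since no compatibility between the topology on $\mcl{Y}$ and $f$ is assumed on the $y$-side. This can be addressed either by endowing $\mcl{Y}$ with the initial topology generated by the family $\{y \mapsto f(x, y) : x \in \mcl{X}\}$, which makes every $y$-section continuous while preserving convexity as a set property, or by appealing to a variant of the minimax theorem in the style of K\"onig or Ky Fan that dispenses with topological requirements on $\mcl{Y}$ when concavity is already present. Alternatively, one can bypass Sion's theorem entirely and mimic its standard sublevel-set proof: for $\alpha < v^\star$, the closed convex sets $L_\alpha(y) = \{x : f(x, y) \le \alpha\}$ have empty intersection, so compactness of $\mcl{X}$ yields a finite subcollection with empty intersection, and an inductive concavity argument on simplices produces a $y^\star$ in their convex hull with $L_\alpha(y^\star) = \emptyset$, giving $\sup_y \inf_x f(x, y) > \alpha$. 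Condition (3) is the structural assumption that makes the truncation (or, equivalently, the treatment of sublevel sets with possibly $+\infty$ values) go through cleanly, ruling out pathological configurations that would otherwise obstruct the passage between $f$ and a real-valued surrogate.
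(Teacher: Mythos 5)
Your truncation argument takes a genuinely different route from the paper's. The paper follows Aubin: it introduces the intermediate value $v^{\natural}=\sup_{J}\inf_{x}\max_{y\in J}f(x,y)$ over finite subsets $J$ of $\mcl{Y}$, proves $v^{\natural}=v^{\sharp}$ by a finite-intersection-property argument on the compact $\mcl{X}$, and proves $v^{\flat}=v^{\natural}$ by separating the convex set $\phi_J(\mcl{X}_J)+\mbb{R}_{\ge 0}^{n}$ from a point in $\mbb{R}^n$. There, \emph{genuine} convexity of $f(\cdot,y)$ (not just quasi-convexity) is what makes the finite-dimensional separation work, no topology on $\mcl{Y}$ is ever needed, and hypothesis (3) enters precisely to control the effective domains $\mcl{X}_J=\bigcap_{y\in J}\mathrm{dom}\,f(\cdot,y)$ in Lemma \ref{lem:lem8.2:aubin:modified} --- not in an ``$\infty$ versus finite'' case split. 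Your untruncation bookkeeping is airtight: for $M>v^{\sharp}$ one gets $\inf_x\sup_y f_M=v^{\sharp}$ and $\sup_y\inf_x f_M=\min\{v^{\flat},M\}=v^{\flat}$, so everything reduces to a real-valued minimax theorem for $f_M$. If such a theorem were available off the shelf under the stated hypotheses, your proof would be shorter; Aubin's is self-contained under exactly those hypotheses.

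Two concrete problems remain. First, your handling of the case $v^{\sharp}=\infty$ is wrong: $\sup_{y}f(x,y)=\infty$ does not produce any single $y$ with $f(x,y)=\infty$, so condition (3) does not force $f\equiv\infty$. The case is rescuable --- run the truncation with arbitrary $M$ and conclude $\sup_y\inf_x f\ge M-\eps$ for every $M$ and $\eps>0$ --- but the step as written is false. Second, and more seriously, truncation destroys convexity of $f(\cdot,y)$ and leaves only quasi-convexity, which forces you into Sion's theorem proper (KKM/connectedness machinery) rather than a convexity-plus-separation argument; you correctly flag that upper semi-continuity in $y$ is then missing, but none of your three proposed repairs is executed. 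Re-topologizing $\mcl{Y}$ must still leave the connectedness-of-convex-sets arguments in Sion's proof valid, the K\"onig/Ky Fan variants you gesture at carry their own hypotheses that need to be checked against ``convex topological spaces'' as stated, and the third option --- redoing the sublevel-set argument by hand --- amounts to a full reproof that you have not carried out. Until one of these is completed, the invocation of ``classical Sion'' is a genuine gap. It is also worth noting that your completed argument would apparently never use hypothesis (3); that should make you suspicious, and indeed in the paper's proof (3) is indispensable rather than decorative.
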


Proposition \ref{prop:sionminimax} is a mild adaptation of the Minimax theorem presented in \cite{aubin_book_2002}[Theorem 8.1] where a real-valued function is considered. In the MA-C-POMDP model described in Section \ref{sec:problem}, it is possible that $\fullccosts{u}$ and hence $\lags{u}{\lambda}$ is $\infty$ for all $\lambda \in \mcl{Y}$. We will use the same methodology as in \cite{aubin_book_2002}[Propositions 8.2 and 8.3] to prove Proposition \ref{prop:sionminimax}. In particular, the entire proof remains the same except that in Lemma \ref{lem:lem8.2:aubin:modified}, the compactness of $\mcl{X}$ is used together with Assumption 3). 

Define
\begin{align*}
f^{\sharp}(x) & :=\sup_{y \in \mcl{Y}} f(x, y), & & v^{\sharp}:=\inf_{x \in \mcl{X}} \sup_{y \in \mcl{Y}} f(x, y) \numberthis\\
f^b(y) & :=\inf_{x \in \mcl{X}} f(x, y), & & v^{\flat}:=\sup_{y \in \mcl{Y}} \inf_{x \in \mcl{X}} f(x, y).\numberthis
\end{align*}
To show the equality of $v^{\sharp}$ and $v^{\flat}$, we will introduce an intermediate value $v^{\natural}$ ($v$ natural) and prove successively that $v^{\natural}=v^{\sharp}$ and that $v^{\natural}=v^{\flat}$. 

We denote the family of finite subsets $J$ of $\mcl{Y}$ by $\mcl{J}$. We set
$$
v_J^{\sharp}:=\inf_{x \in \mcl{X}} \sup_{y \in J} f(x, y)
$$
and
$$
v^{\natural}:=\sup_{J \in \mcl{J}} v_J^{\sharp}=\sup_{J \in \mcl{J}} \inf_{x \in \mcl{X}} \sup_{y \in J} f(x, y).
$$
Since every point $y$ of $\mcl{Y}$ may be identified with the finite subset $\{y\} \in \mcl{J}$, we note that $v_{\{y\}}^{\sharp}=f^b(y)$ and consequently, $v^{\flat}=\sup_{y \in \mcl{Y}} v_{\{y\}}^{\sharp} \leq \sup_{J \in \mcl{J}} v_J^{\sharp} = v^{\natural}$. Also, since $\sup_{y \in J} f(x, y) \leq \sup_{y \in \mcl{Y}} f(x, y)$, we deduce that $v_J^{\sharp} \leq v^{\sharp}$, and hence $v^{\natural} \leq v^{\sharp}$. In summary, we have shown that
\begin{align*}
v^{\flat} \leq v^{\natural} \leq v^{\sharp} .
\end{align*}
Lemma \ref{lem:prop8.2:aubin} shows that $v^{\natural} = v^{\sharp} $ and Lemma \ref{lem:prop8.3:aubin} shows that $v^{\flat} = v^{\natural}$. This concludes the proof.

\begin{lem}\label{lem:prop8.2:aubin}
Consider a function $f : \mcl{X} \times \mcl{Y} \mapsto \mbb{R} \cup \{ \infty \} $ such that $\mcl{X}$ is compact and for each $
y \in \mcl{Y}$, $f(\cdot, y)$ is lower semi-continuous. Then, there exists $x^\star \in \mcl{X}$ such that
$$
\sup_{y \in \mcl{Y}} f(x^\star, y)=v^{\sharp}
$$
and
$$
v^{\natural}=v^{\sharp} .
$$
\end{lem}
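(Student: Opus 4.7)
The lemma has two assertions; I would handle them in order using standard lower-semi-continuity-plus-compactness arguments. For the existence of $x^\star$ attaining $v^\sharp$, I would observe that $f^\sharp(x) = \sup_{y \in \mcl{Y}} f(x, y)$ is lower semi-continuous on $\mcl{X}$ as a pointwise supremum of lower semi-continuous functions (each $f(\cdot, y)$ is lsc by hypothesis, and this property passes through arbitrary suprema into $\mbb{R} \cup \{\infty\}$). Since $\mcl{X}$ is compact, such an lsc function attains its infimum, yielding $x^\star \in \mcl{X}$ with $f^\sharp(x^\star) = v^\sharp$.

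For the equality $v^\natural = v^\sharp$, I already have $v^\natural \le v^\sharp$ from the preamble, so only $v^\sharp \le v^\natural$ needs proof. When $v^\natural = \infty$ the inequality is immediate, so I would focus on the case $v^\natural < \infty$ and use a finite-intersection-property argument. For each $y \in \mcl{Y}$, define the sublevel set
\begin{align*}
X_y := \{ x \in \mcl{X} : f(x, y) \le v^\natural \},
\end{align*}
which is closed in $\mcl{X}$ since $f(\cdot, y)$ is lsc and $v^\natural$ is finite. For any finite $J \in \mcl{J}$, the finite supremum $x \mapsto \max_{y \in J} f(x, y)$ is lsc on the compact space $\mcl{X}$ and so attains its infimum $v_J^\sharp$ at some $x_J$; since $v_J^\sharp \le v^\natural$ by definition of $v^\natural$, the point $x_J$ lies in $\bigcap_{y \in J} X_y$. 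Thus the family $\{X_y\}_{y \in \mcl{Y}}$ of closed subsets of the compact space $\mcl{X}$ enjoys the finite intersection property, so $\bigcap_{y \in \mcl{Y}} X_y \neq \emptyset$. Any $\bar{x}$ in this intersection satisfies $f(\bar{x}, y) \le v^\natural$ for every $y$, giving $v^\sharp \le f^\sharp(\bar{x}) \le v^\natural$. Combined with the attainment statement, this yields both $v^\natural = v^\sharp$ and $\sup_{y \in \mcl{Y}} f(x^\star, y) = v^\sharp$.

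The principal subtlety — and where Assumption 3) of Proposition \ref{prop:sionminimax} is invoked — is that $f$ is allowed to take the value $\infty$. Assumption 3) ensures that for each fixed $x$, either $f(x, \cdot) \equiv \infty$ or $f(x, \cdot)$ is everywhere finite; this coherence is what lets the sublevel sets $X_y$ behave predictably across $y$ (e.g., any $\bar{x} \in \bigcap_y X_y$ automatically has $f(\bar{x}, y) < \infty$ for all $y$ when $v^\natural$ is finite, so no hidden infinities sneak into $f^\sharp(\bar{x})$) and allows us to dispose of the $v^\natural = \infty$ case uniformly rather than on a coordinate-by-coordinate basis in $y$. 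The compactness of $\mcl{X}$ is the other indispensable ingredient: it is what converts the finite intersection property into a non-empty full intersection in the second part, and what guarantees attainment of the infimum in the first part.
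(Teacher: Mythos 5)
Your proof is correct and follows essentially the same route as the paper: closed sublevel sets $S_y = \{x : f(x,y) \le v^\natural\}$, the finite-intersection property obtained by minimizing $\max_{y\in J} f(\cdot,y)$ over the compact $\mcl{X}$, and Weierstrass for the lsc envelope $f^\sharp$. One small correction: Assumption 3) of Proposition \ref{prop:sionminimax} is neither a hypothesis of this lemma nor needed for your argument (as you yourself observe, any $\bar x$ in the intersection has $f(\bar x, y) \le v^\natural < \infty$ for all $y$ by definition of the sublevel sets); in the paper that assumption is used only later, in Lemma \ref{lem:lem8.2:aubin:modified}.
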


\begin{rem}
Since the functions $f(\cdot, y)$ are lower semi-continuous, the same is true of the function $f^{\sharp}$.\footnote{Supremum of arbitrary collection of lower semi-continuous functions is lower semi-continuous.} Since $\mcl{X}$ is compact, Weierstrass's theorem implies the existence of $x^\star \in \mcl{X}$ which minimises $f^{\sharp}$. Following (3), this may be written as
\begin{align*}
&\sup_{y \in \mcl{Y}} f(x^\star, y) = f^{\sharp}(x^\star) = \inf_{x \in \mcl{X}} f^{\sharp}(x) \\
&\hspace{50pt} = \inf_{x \in \mcl{X}} \sup_{y \in \mcl{Y}} f(x, y)=v^{\sharp}.
\end{align*}
In comparison to this, Lemma \ref{lem:prop8.2:aubin} proves that $v^{\natural} = v^{\sharp} $.
\end{rem}

\begin{proof}
It suffices to show that there exists $x^\star \in \mcl{X}$ such that
\begin{align*}
\sup_{y \in \mcl{Y}} f(x^\star, y) \leq v^{\natural}.\numberthis\label{eq:vsharp<=vnatural}
\end{align*}
Since $v^{\sharp} \leq \sup_{y \in \mcl{Y}} f(x^\star, y)$ and $v^{\natural} \leq v^{\sharp}$, we shall deduce that $v^{\natural}=v^{\sharp}$.
We set
$$
S_{y}:=\l\{x \in \mcl{X} \mid f(x, y) \leq v^{\natural}\r\}.
$$
The inequality \eqref{eq:vsharp<=vnatural} is equivalent to the inclusion
\begin{align*}
x^\star \in \bigcap_{y \in \mcl{Y}} S_{y}.\numberthis\label{eq:nonemptyintersection}
\end{align*}
Thus, we must show that this intersection is non-empty.
For this, we shall prove that the $S_{y}$ are closed sets (inside the compact set $\mcl{X}$) with the finite-intersection property.\footnote{The intersection of an arbitrary collection of closed sets that lie inside a compact set and satisfy the finite-intersection property, is non-empty.}

If $v^{\natural} = \infty$, then every $S_y$ equals $\mcl{X}$ and the intersection is trivially non-empty. Therefore, WLOG, assume that $v^{\natural}$ is finite. Then the set $S_{y}$ is a lower section of the lower semi-continuous function $f(\cdot, y)$ and is thus closed.\footnote{The lower section of a lower semi-continuous function is closed. For every $\eta \in \mbb{R}$, the corresponding lower section is defined as $\{x \in \mcl{X} : f(x) \le \eta \}$.}

We show that for any finite sequence $J :=\l\{y_{1}, y_{2}, \ldots, y_{n}\r\} \in \mcl{J}$ of $\mcl{Y}$, the finite intersection
$$
\bigcap_{i \in [n]} S_{y_i} \neq \emptyset
$$
is non-empty. In fact, since $\mcl{X}$ is compact, and since $\max_{y \in J} f(\cdot, y) $ is lower semi-continuous, it follows that there exists $\hat{x} \in \mcl{X}$ which minimises this function. Such an $\hat{x} \in \mcl{X}$ satisfies
\begin{align*}
\max_{y \in J} f(\hat{x}, y) &= \inf_{x \in \mcl{X}} \max_{y \in J} f(x, y) \\
&\leq \sup_{J \in \mcl{J}} \inf_{x \in \mcl{X}} \max_{y \in J} f(x, y)=v^{\natural} .
\end{align*}
Since $\mcl{X}$ is compact, the intersection of the closed sets $S_{y}$ is non-empty and there exists $x^\star \in \mcl{X}$ satisfying \eqref{eq:nonemptyintersection} and thus \eqref{eq:vsharp<=vnatural}.
\end{proof}

\begin{lem}\label{lem:prop8.3:aubin}
Consider a function $f : \mcl{X} \times \mcl{Y} \mapsto \mbb{R} \cup \{ \infty \} $ such that $\mcl{X}$ and $\mcl{Y}$ are convex sets, (i) for each $y \in \mcl{Y}$, $f(\cdot, y)$ is convex, and (ii) for each $x \in \mcl{X}$, $f(x, \cdot)$ is concave. Then, $v^{\flat}=v^{\natural}$.
\end{lem}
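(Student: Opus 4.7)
Plan. Since the inequality $v^{\flat}\le v^{\natural}$ has already been recorded in the discussion preceding Lemma~\ref{lem:prop8.2:aubin}, the task reduces to showing $v^{\natural}\le v^{\flat}$. By definition of $v^{\natural}$ as a supremum over finite subsets $J\in\mcl{J}$, it suffices to prove that for every finite $J=\{y_{1},\ldots,y_{n}\}\subset\mcl{Y}$, the quantity $\beta_{J}:=\inf_{x\in\mcl{X}}\max_{1\le i\le n}f(x,y_{i})$ satisfies $\beta_{J}\le v^{\flat}$. The plan is to convert this into a finite-dimensional hyperplane separation problem in $\mbb{R}^{n}$ and then use concavity of $f(x,\cdot)$ to pull the separating functional back into $\mcl{Y}$ as a single ``witness'' point $\bar{y}\in\mcl{Y}$ dominated by $v^{\flat}$.

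Concretely, I would define the upward-closed subset
\begin{equation*}
A \;:=\; \bigl\{\, r\in\mbb{R}^{n}:\exists\, x\in\mcl{X}\ \text{with}\ f(x,y_{i})\le r_{i}\ \forall\, i\in[n]\,\bigr\}\subset\mbb{R}^{n},
\end{equation*}
which is convex by convexity of $f(\cdot,y_{i})$ and of $\mcl{X}$ (take a convex combination of two witnessing $x$'s). The interesting case is $\beta_{J}\in\mbb{R}$: the extreme cases $\beta_{J}=-\infty$ and $\beta_{J}=+\infty$ are handled trivially (the latter via Assumption 3, which forces $f(x,\cdot)\equiv\infty$ whenever $\max_{i} f(x,y_{i})=\infty$, so $v^{\flat}=\infty$). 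For any $\epsilon>0$, the point $r_{0}=(\beta_{J}-\epsilon)\mathbf{1}$ cannot lie in $A$ (else some $x$ would contradict the definition of $\beta_{J}$), while $A\ne\emptyset$ because some $x$ attains $\max_{i} f(x,y_{i})<\infty$. Proposition~\ref{prop:separation_theorem} then yields $p\in\mbb{R}^{n}\setminus\{0\}$ with $\inf_{r\in A}\dotp{p}{r}\ge \dotp{p}{r_{0}}=(\beta_{J}-\epsilon)\sum_{i}p_{i}$.

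The second step is to verify $p\ge 0$ componentwise: since $A$ is closed under coordinatewise increases, a negative component of $p$ would drive $\inf_{r\in A}\dotp{p}{r}$ to $-\infty$. Hence $\sigma:=\sum_{i}p_{i}>0$, and $q_{i}:=p_{i}/\sigma$ defines a probability vector on $J$. For any $x$ with $f(x,y_{i})<\infty$ for every $i$, the choice $r=(f(x,y_{1}),\ldots,f(x,y_{n}))\in A$ gives $\sum_{i}q_{i}f(x,y_{i})\ge\beta_{J}-\epsilon$; then concavity of $f(x,\cdot)$ and convexity of $\mcl{Y}$ produce $\bar{y}:=\sum_{i}q_{i}y_{i}\in\mcl{Y}$ with $f(x,\bar{y})\ge \sum_{i}q_{i}f(x,y_{i})\ge \beta_{J}-\epsilon$. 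For any remaining $x$ with some $f(x,y_{i})=\infty$, Assumption 3 gives $f(x,\bar{y})=\infty\ge\beta_{J}-\epsilon$ directly. Taking the infimum over $x$ and then the supremum over $y\in\mcl{Y}$ yields $v^{\flat}\ge\inf_{x}f(x,\bar{y})\ge\beta_{J}-\epsilon$, and letting $\epsilon\downarrow 0$ followed by supremum over $J$ gives $v^{\flat}\ge v^{\natural}$.

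The main obstacle I anticipate is the careful bookkeeping around the extended-real-valued nature of $f$: one must ensure $A$ is nonempty so that separation applies (this is where the case split on $\beta_{J}$ matters), and one must treat $x$'s for which the vector $(f(x,y_{i}))_{i}$ has $\infty$ entries separately, since such $x$ do not directly contribute admissible points $r\in A$. Assumption 3 of Proposition~\ref{prop:sionminimax} is exactly the hypothesis that makes this last step clean, even though it is not listed in the statement of Lemma~\ref{lem:prop8.3:aubin}; since the lemma is only invoked inside the proof of that proposition, this is consistent. Apart from these technicalities, the argument is the standard Ville-type minimax reduction and involves no further compactness or topological input on $\mcl{X}$ or $\mcl{Y}$.
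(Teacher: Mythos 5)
Your proof is correct, and it uses the same two key ingredients as the paper's: hyperplane separation of a constant vector from the convex, upward-closed set $\phi_J(\mcl{X}_J)+\mbb{R}_{\ge 0}^{n}$ (your $A$ is exactly this set), followed by concavity of $f(x,\cdot)$ and convexity of $\mcl{Y}$ to pull the normalized separating vector back to a single witness $\bar{y}\in\mcl{Y}$. The difference is in the decomposition. The paper routes the argument through the intermediate quantity $w_J=\sup_{\lambda\in M_J}\inf_{x}\dotp{\lambda}{\phi_J(x)}$ and proves $v_J^{\sharp}\le w_J$ (Lemma \ref{lem:lem8.2:aubin:modified}, separating $(w_J+\eps)\mathbf{1}$ and arguing by contradiction) and $w_J\le v^{\flat}$ (Lemma \ref{lem:lem8.3:aubin}) as two separate steps. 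You instead separate $(v_J^{\sharp}-\eps)\mathbf{1}$ directly --- its non-membership in $A$ is immediate from the definition of $v_J^{\sharp}$ as an infimum, so no contradiction is needed --- and feed the resulting probability vector straight into the concavity step. This buys two things: the object $w_J$ disappears, and you never need compactness of $\mcl{X}$ or lower semi-continuity of $f(\cdot,y)$, which the paper's Lemma \ref{lem:lem8.2:aubin:modified} invokes (via Weierstrass) only to rewrite $w_J$ as an infimum over $\mcl{X}_J$; your route therefore matches the hypotheses actually listed in the present lemma more closely. Like the paper, you do rely on condition 3) of Proposition \ref{prop:sionminimax} (to dispose of $x$ with some $f(x,y_i)=\infty$ and of the case $v_J^{\sharp}=\infty$), which is not among the lemma's stated hypotheses but is available where the lemma is invoked; you flag this explicitly, which is correct. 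Your handling of the degenerate cases ($\beta_J=\pm\infty$, nonemptiness of $A$, nonnegativity and nontrivial normalization of the separating vector) is also sound.
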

\begin{proof}
We set $M_J:=\l\{\lambda \in \mbb{R}_{\ge 0}^{|J|} \mid \sum_{i=1}^n \lambda_i=1\r\}$. With any finite (ordered) subset $J \defeq \l\{y_1, y_2, \ldots, y_n\r\}$, we associate the mapping $\phi_J$ from $\mcl{X}$ to $\l( \mbb{R} \cup \{ \infty \} \r)^{|J|}$ defined by
$$
\phi_J(x):=\l(f\l(x, y_1\r), \ldots, f\l(x, y_n\r)\r)
$$
We also set
$$
w_J:=\sup_{\lambda \in M_J} \inf_{x \in \mcl{X}} \dotp{\lambda}{\phi_J(x)}
$$
We prove successively that
\begin{enumerate}
    \item $\sup_{J\in\mcl{J}} w_J \leq v^{\flat}$ (Lemma \ref{lem:lem8.3:aubin}).
    \item $\sup_{J\in\mcl{J}} v^{\sharp}_{J}  \le \sup_{J\in\mcl{J}} w_J$ (Lemma \ref{lem:lem8.2:aubin:modified}).
\end{enumerate}
Hence, the inequalities
\begin{align*}
v^{\natural} = \sup_{J \in \mcl{J}} v_J^{\sharp} \leq \sup_{J \in \mcl{J}} w_J \leq v^{\flat} \leq v^{\natural}
\end{align*}
imply the desired equality 
 $v^{\flat}=v^{\natural}$.
\end{proof}

\begin{lem}\label{lem:lem8.3:aubin}
Consider a function $f : \mcl{X} \times \mcl{Y} \mapsto \mbb{R} \cup \{ \infty \} $ such that $\mcl{Y}$ is convex and for each $x \in \mcl{X}$, $f(x, \cdot)$ is concave. Then, for any finite subset $J$ of $\mcl{Y}$, we have $w_J \leq v^{\flat}$. Hence, $$\sup_{J\in\mcl{J}} w_J \le v^{\flat}.$$    
\end{lem}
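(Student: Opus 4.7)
The plan is to reduce $w_J$ to $v^{\flat}$ by invoking concavity of $f(x,\cdot)$ on each convex combination $\sum_i \lambda_i y_i$, which is a legal element of $\mcl{Y}$ because $\mcl{Y}$ is convex. More precisely, I would fix a finite subset $J=\{y_1,\dots,y_n\}\subseteq \mcl{Y}$ and an arbitrary $\lambda=(\lambda_1,\dots,\lambda_n)\in M_J$, and then set $y_{\lambda}:=\sum_{i=1}^{n}\lambda_i y_i$. Convexity of $\mcl{Y}$ (applied inductively to a convex combination of $n$ points) yields $y_{\lambda}\in\mcl{Y}$.

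The key step is the pointwise inequality
\begin{align*}
\dotp{\lambda}{\phi_J(x)} \;=\; \sum_{i=1}^{n}\lambda_i f(x,y_i)\;\le\; f\!\left(x,\sum_{i=1}^{n}\lambda_i y_i\right)\;=\; f(x,y_{\lambda})
\end{align*}
valid for every $x\in\mcl{X}$. This is just the (finite-form) definition of concavity of $f(x,\cdot)$ used recursively on the $n$ points; it remains valid in the extended sense where $f$ may take the value $+\infty$, since if any $f(x,y_i)=\infty$ with $\lambda_i>0$, then concavity forces $f(x,y_{\lambda})=\infty$ as well, and both sides equal $\infty$.

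Taking the infimum over $x\in\mcl{X}$ on both sides and then specialising $y_{\lambda}\in\mcl{Y}$ inside the $\sup_{y\in\mcl{Y}}$ gives
\begin{align*}
\inf_{x\in\mcl{X}}\dotp{\lambda}{\phi_J(x)} \;\le\; \inf_{x\in\mcl{X}} f(x,y_{\lambda}) \;\le\; \sup_{y\in\mcl{Y}}\inf_{x\in\mcl{X}} f(x,y) \;=\; v^{\flat}.
\end{align*}
Since this upper bound $v^{\flat}$ does not depend on $\lambda$, taking the supremum over $\lambda\in M_J$ on the left yields $w_J\le v^{\flat}$. Finally, $J\in\mcl{J}$ was arbitrary, so $\sup_{J\in\mcl{J}} w_J \le v^{\flat}$, which is the claim.

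I do not expect a substantive obstacle here: convexity of $\mcl{Y}$ and concavity of $f(x,\cdot)$ are exactly what is needed, and the extended-real arithmetic causes no trouble because the troublesome case $(+\infty$ on the left$)$ is automatically absorbed by concavity on the right. No compactness, topology, or lower semi-continuity hypothesis enters this lemma; those are reserved for the companion Lemma \ref{lem:lem8.2:aubin:modified} that controls the other inequality $\sup_J v_J^{\sharp}\le \sup_J w_J$.
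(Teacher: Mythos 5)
Your proof is correct and follows essentially the same route as the paper: form $y_\lambda=\sum_i\lambda_i y_i\in\mcl{Y}$ by convexity, apply concavity of $f(x,\cdot)$ to get $\dotp{\lambda}{\phi_J(x)}\le f(x,y_\lambda)$, take the infimum over $x$, bound by $v^{\flat}$, and then take the supremum over $\lambda\in M_J$. Your extra remark on the extended-real case (that $f(x,y_i)=\infty$ with $\lambda_i>0$ forces $f(x,y_\lambda)=\infty$ by concavity) is a correct and harmless addition that the paper leaves implicit.
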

\begin{proof}
With each $\lambda \in M_J$, we associate the point $y_\lambda:=\sum_{i=1}^n \lambda_i y_i$ which belongs to $\mcl{Y}$ since $\mcl{Y}$ is convex. The concavity of the functions $\l\{ f(x, \cdot)\r\}_{x\in\mcl{X}}$ implies that
\begin{align*}
\forall x \in \mcl{X}, \quad \sum_{i=1}^n \lambda_i f\l(x, y_i\r) \leq f\l(x, y_\lambda\r).
\end{align*}
Consequently,
\begin{align*}
\inf_{x \in \mcl{X}} \sum_{i=1}^n \lambda_i f\l(x, y_i\r) &\leq \inf_{x \in \mcl{X}} f\l(x, y_\lambda\r) \\
&\leq \sup_{y \in \mcl{Y}} \inf_{x \in \mcl{X}} f(x, y) \defeq v^{\flat}.
\end{align*}
The proof is completed by taking the supremum over $M_J$.
\end{proof}

\begin{lem}\label{lem:lem8.2:aubin:modified}
Consider a function $f : \mcl{X} \times \mcl{Y} \mapsto \mbb{R} \cup \{ \infty \} $ such that $\mcl{X}$ is a convex compact topological space, for each $y \in \mcl{Y}$, $f(\cdot, y)$ is convex and lower semi-continuous, and $f (x, y) = \infty$ implies $f(x, y') = \infty$ for all $y'\in\mcl{Y}$.
Then, 
\begin{align*}
v^{\natural} \defeq \sup_{J \in \mcl{J}} v_J^{\sharp} \leq \sup_{J \in \mcl{J}} w_J .
\end{align*}
\end{lem}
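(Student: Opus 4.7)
The plan is to establish the stronger pointwise claim $v_J^\sharp \le w_J$ for every $J \in \mcl{J}$, from which the stated inequality follows by taking $\sup_{J \in \mcl{J}}$ on both sides. I would proceed via a hyperplane separation argument in $\mbb{R}^n$ adapted to handle $\infty$-values using the lemma's third hypothesis.

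Fix $J = \{y_1, \dots, y_n\} \in \mcl{J}$ and let $r < v_J^\sharp$ be arbitrary (if $v_J^\sharp = -\infty$ the inequality is vacuous). Introduce the set $\mcl{X}_{\text{fin}} := \{ x \in \mcl{X} : f(x, y_1) < \infty\}$; by the third hypothesis this coincides with $\{ x \in \mcl{X} : f(x, y) < \infty \text{ for all } y \in \mcl{Y}\}$, and convexity of each $f(\cdot, y_i)$ then yields that $\mcl{X}_{\text{fin}}$ is convex. If $\mcl{X}_{\text{fin}} = \emptyset$, then $f(\cdot, y_i) \equiv \infty$ for every $i$, so $v_J^\sharp = \infty$ and also $w_J = \infty$ (any $\lambda \in M_J$ has $\lambda_i > 0$ for some $i$, forcing $\inf_x \dotp{\lambda}{\phi_J(x)} = \infty$), so the inequality holds; hence assume $\mcl{X}_{\text{fin}} \neq \emptyset$.

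Next, I would consider the convex set $M := \phi_J(\mcl{X}_{\text{fin}}) + \mbb{R}_{\ge 0}^n \subseteq \mbb{R}^n$, whose convexity follows from convexity of $\mcl{X}_{\text{fin}}$, componentwise convexity of $\phi_J$ on $\mcl{X}_{\text{fin}}$, and the fact that $\mbb{R}_{\ge 0}^n$ is a convex cone. The point $r\mathbf{1} := (r, \dots, r)$ does not belong to $M$: otherwise, there would exist $x \in \mcl{X}_{\text{fin}}$ with $f(x, y_i) \le r$ for all $i$, giving $v_J^\sharp \le \max_i f(x, y_i) \le r$ and contradicting $r < v_J^\sharp$. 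Proposition \ref{prop:separation_theorem} then furnishes $\rho \in \mbb{R}^n \setminus \{0\}$ with $\inf_{z \in M} \dotp{\rho}{z} \ge r \sum_i \rho_i$. Driving $z = \phi_J(x_0) + t\, e_i$ to infinity along any coordinate forces $\rho_i \ge 0$ for all $i$, so $\sum_i \rho_i > 0$; normalize to $\mu := \rho / \sum_j \rho_j \in M_J$.

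For $x \in \mcl{X}_{\text{fin}}$, the separation bound gives $\dotp{\mu}{\phi_J(x)} \ge r$; for $x \notin \mcl{X}_{\text{fin}}$, the third hypothesis forces $f(x, y_i) = \infty$ for all $i$, so $\dotp{\mu}{\phi_J(x)} = \infty \ge r$. Combining, $\inf_{x \in \mcl{X}} \dotp{\mu}{\phi_J(x)} \ge r$, hence $w_J \ge r$; letting $r \uparrow v_J^\sharp$ yields $w_J \ge v_J^\sharp$, and supremizing over $J$ completes the proof. The main obstacle is the clean treatment of points $x$ outside $\mcl{X}_{\text{fin}}$: without the third hypothesis, $\phi_J(x)$ could mix finite and $\infty$ coordinates on which $\dotp{\mu}{\phi_J(x)}$ might be undefined or fall below $r$, and one could not lift the separation bound from $\mcl{X}_{\text{fin}}$ to all of $\mcl{X}$. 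The $\infty$-propagation hypothesis is precisely what closes this gap and is the one ingredient beyond the classical Sion argument that this adaptation requires.
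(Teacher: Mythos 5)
Your proof is correct and follows essentially the same route as the paper's: a hyperplane separation of a scalar multiple of $\mathbf{1}$ from the convex set $\phi_J(\mcl{X}_J)+\mbb{R}_{\ge 0}^n$ (your $\mcl{X}_{\mathrm{fin}}$ coincides with the paper's $\mcl{X}_J$), with the $\infty$-propagation hypothesis playing the identical role of controlling points outside the common domain. The only difference is organizational: you separate $r\mathbf{1}$ for $r<v_J^\sharp$ and conclude $w_J\ge r$ directly, whereas the paper shows $(w_J+\eps)\mathbf{1}$ lies in the set by contradiction; your phrasing incidentally dispenses with the paper's preliminary Weierstrass step (which invokes compactness and lower semi-continuity to replace $\inf_{x\in\mcl{X}}$ by $\inf_{x\in\mcl{X}_J}$), since points outside $\mcl{X}_J$ contribute $+\infty$ to $\dotp{\mu}{\phi_J(\cdot)}$ in any case.
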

\begin{proof}
WLOG we assume that $\sup_{J \in \mcl{J}} w_J < \infty$. In this case, we can rewrite $w_J$ as $\supinf{\lambda\in M_J}{x\in \mcl{X}_J} \dotp{\lambda}{\phi_J(x)}$ where 
$$\mcl{X}_J \defeq \bigcap_{y\in J} dom f(\cdot, y).$$
To see this, note that $\dotp{\lambda}{\phi_J(x)} $ is a lower semi-continuous function on the compact space $\mcl{X}$. By Weierstrass theorem, $\dotp{\lambda}{\phi_J(x)} $ achieves its minimum in $\mcl{X}$ and we can write $w_J = \sup_{\lambda \in M_J} \dotp{\lambda}{\phi_J(\hat{x}(\lambda))}$. Suppose that $\hat{x}(\lambda) \in \mcl{X} \setminus \mcl{X}_J$, i.e., there exists $y \in J$ such that $\hat{x}(\lambda) \notin dom f(\cdot, y)$. This implies that $\hat{x}(\lambda) \notin dom f(\cdot, y')$ for all $y' \in J$. This renders $w_J$ to be infinity which contradicts our assumption $\sup_{J\in\mcl{J}} w_J < \infty $.

Therefore, now onward, we assume each $w_J = \sup_{\lambda\in M_J} \inf_{x \in \mcl{X}_J} \dotp{\lambda }{ \phi_J(x) }$. To prove the lemma, it suffices to show that $v_J^{\sharp} \le w_J $. Let $\eps>0$ and denote $\mbf{1} \defeq (1, \ldots, 1)$. We shall show that
\begin{align*}
\l( w_J + \eps \r) \mbf{1} \in \phi_J(\mcl{X}_J) + \mbb{R}_{\ge 0}^n .\numberthis\label{eq:wj_in_convex_set}
\end{align*}
Suppose that this is not the case. Since $\phi_J(\mcl{X}_J)+\mbb{R}_{\ge 0}^n$ is a convex set in $\mbb{R}^n $ (see Lemma \ref{lem:lem8.1:aubin:modified}), we may use the hyperplane separation theorem (see Proposition \ref{prop:separation_theorem}), via which there exists $\rho \in \mbb{R}^n$, $\rho \neq 0$, such that
\begin{align*}
\sum_{i=1}^n \rho_i \l( w_J + \eps \r) &=\dotp{\rho}{\l(w_J + \eps\r) \mbf{1}} \\
&\leq \inf_{v \in \phi_J(\mcl{X}_J)+\mbb{R}_{\ge 0}^n} \dotp{\rho}{v}\\
& =\inf_{x \in \mcl{X}_J} \dotp{\rho}{ \phi_J(x)} + \inf_{u \in \mbb{R}_{\ge 0}^n} \dotp{\rho}{u}.
\end{align*}
Then $\inf_{u \in \mbb{R}_{\ge 0}^n} \dotp{\rho}{u}$ is bounded below and consequently, $\rho$ belongs to $\mbb{R}_{\ge 0}^n$ and $\inf_{u \in \mbb{R}_{\ge 0}^n} \dotp{\rho}{u}$ is equal to 0. Since $\rho$ is non-zero, $\sum_{i=1}^n \rho_i$ is strictly positive. We set $\bar{\lambda} =\rho / \sum_{i=1}^n \rho_i \in M_J$ and 
deduce that
\begin{align*}
w_J + \eps &\leq \inf_{x \in \mcl{X}_J } \dotp{\bar{\lambda}} {\phi_J(x)} \\
&\leq \sup_{\substack{\lambda \in M_J}} \inf_{x \in \mcl{X}_J} \dotp{\lambda}{ \phi_J(x) }= w_J.
\end{align*}
This is impossible and thus \eqref{eq:wj_in_convex_set} is established, which implies that there exist $x_{\eps} \in \mcl{X}_J$ and $u_{\eps} \in \mbb{R}_{\ge 0}^n$ such that $\l(w_J+\eps\r) \mathbf{1}=$ $\phi_J\l(x_{\eps}\r)+u_{\eps}$.
From the definition of $\phi_J$, we deduce that
\begin{align*}
\forall i=1, \ldots, n, \quad f\l(x_{\eps}, y_i\r) \leq w_J+\eps,
\end{align*}
and hence 
\begin{align*}
v_J^{\sharp} \leq \max _{i=1, \ldots, n} f\l(x_{\eps}, y_i\r) \leq w_J+\eps.
\end{align*}
We complete the proof of the lemma by letting $\eps$ tend to 0.   
\end{proof}

\begin{lem}\label{lem:lem8.1:aubin:modified}
Consider a function $f : \mcl{X} \times \mcl{Y} \mapsto \mbb{R} \cup \{ \infty \} $ such that $\mcl{X}$ is convex and for each $y \in \mcl{Y}$, $f(\cdot, y)$ is convex. Then, $\phi_J(\mcl{X}_J) + \mbb{R}_{\ge 0}^n$ is a convex set in $\mbb{R}^n$.    
\end{lem}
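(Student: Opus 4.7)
The plan is to verify convexity directly from the definition. Take any two points $a, b \in \phi_J(\mcl{X}_J) + \mbb{R}_{\ge 0}^n$ and any $\theta \in [0,1]$; I want to show $\theta a + (1-\theta) b$ lies in the same set. By definition of the Minkowski sum, I may write $a = \phi_J(x_1) + u_1$ and $b = \phi_J(x_2) + u_2$ with $x_1, x_2 \in \mcl{X}_J$ and $u_1, u_2 \in \mbb{R}_{\ge 0}^n$.

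First I would check that the convex combination $x_\theta \defeq \theta x_1 + (1-\theta) x_2$ lies in $\mcl{X}_J$. Convexity of $\mcl{X}$ puts $x_\theta$ into $\mcl{X}$. For each $y_i \in J$, convexity of $f(\cdot, y_i)$ together with $f(x_1, y_i), f(x_2, y_i) < \infty$ (since $x_1, x_2 \in \mcl{X}_J \subseteq \operatorname{dom} f(\cdot, y_i)$) gives
\begin{equation*}
f(x_\theta, y_i) \le \theta f(x_1, y_i) + (1-\theta) f(x_2, y_i) < \infty,
\end{equation*}
so $x_\theta \in \operatorname{dom} f(\cdot, y_i)$ for every $i$, hence $x_\theta \in \mcl{X}_J$.

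Applying the above componentwise inequality coordinate by coordinate, I get
\begin{equation*}
\phi_J(x_\theta) \le \theta \phi_J(x_1) + (1-\theta) \phi_J(x_2)
\end{equation*}
in $\mbb{R}^n$ (with the usual partial order). Equivalently, there exists $v \in \mbb{R}_{\ge 0}^n$ with $\theta \phi_J(x_1) + (1-\theta) \phi_J(x_2) = \phi_J(x_\theta) + v$. Substituting back,
\begin{equation*}
\theta a + (1-\theta) b = \phi_J(x_\theta) + \bigl( v + \theta u_1 + (1-\theta) u_2 \bigr),
\end{equation*}
and since $v + \theta u_1 + (1-\theta) u_2 \in \mbb{R}_{\ge 0}^n$ as a non-negative combination of non-negative vectors, this exhibits $\theta a + (1-\theta) b$ as an element of $\phi_J(\mcl{X}_J) + \mbb{R}_{\ge 0}^n$, completing the proof. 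There is no real obstacle here; the only subtlety worth flagging is the use of the finiteness of $f(x_i, y_j)$ on $\mcl{X}_J$ to justify the convexity inequality without encountering an $\infty - \infty$ ambiguity, which is exactly why the argument is carried out on $\mcl{X}_J$ rather than on all of $\mcl{X}$.
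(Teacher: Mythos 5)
Your proof is correct and follows essentially the same route as the paper's: form the convex combination $x_\theta$ of the base points, use convexity and finiteness of each $f(\cdot,y_i)$ on $\mcl{X}_J$ to place $x_\theta$ in $\mcl{X}_J$ and to obtain the componentwise inequality $\phi_J(x_\theta) \le \theta\phi_J(x_1)+(1-\theta)\phi_J(x_2)$, then absorb the slack into the $\mbb{R}_{\ge 0}^n$ term. Your explicit remark about avoiding an $\infty-\infty$ ambiguity by working on $\mcl{X}_J$ is a nice touch; no gaps.
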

\begin{proof} 
Take any convex combination $\alpha_1 \l(\phi_J (x_1) + u_1 \r) + \alpha_2 \l(\phi_J(x_2) + u_2\r)$ where $\alpha_1, \alpha_2 \geq 0$, $\alpha_1 + \alpha_2 = 1$, $x_1$ and $x_2$ are in $\mcl{X}_J$, and $u_1$ and $u_2$ are in $\mbb{R}_{\ge 0}^n$. Let $x = \alpha_1 x_1 + \alpha_2 x_2$. For each $y \in J$, the function $f(\cdot, y)$ is convex, therefore $\phi_J(x) \le \alpha_1 \phi_J(x_1) + \alpha_2 \phi_J(x_2) < \infty$ (latter by definition of $\mathcal{X}_J$). Hence, $x \in \mcl{X}_J$. We can write the convex combination in the form $\phi_J (x) + u$ where $u = \alpha_1 u_1 + \alpha_2 u_2 + \alpha_1 \phi_J(x)+\alpha_2 \phi_J(y)-\phi_J(x)$. Note that $u \in \mbb{R}_{\ge 0}^{n}$ because $\phi_J(x) \le \alpha_1 \phi_J(x_1) + \alpha_2 \phi_J(x_2)$. Consequently, $\alpha_1\l(\phi_J\l(x\r)+u_1\r)+\alpha_2\l(\phi_J\l(y\r)+u_2\r)=\phi_J(x)+u$ belongs to $\phi_J(\mcl{X}_J)+\mbb{R}_{\ge 0}^n$.
\end{proof}

\section*{Acknowledgment}
This work was funded by NSF via grants ECCS2038416, EPCN1608361, EARS1516075, CNS1955777, CCF2008130, and CMMI2240981 for V. Subramanian, and grants EARS1516075, CNS1955777, CCF2008130, and CMMI2240981 for N. Khan. The authors would also like to thank Dr. Hsu Kao for helpful discussions. 

\ifCLASSOPTIONcaptionsoff
  \newpage
\fi



\bibliographystyle{ieeetr}
\bibliography{references}
%



%

\begin{IEEEbiography}
[{\includegraphics[width=1in, height=1.25in, clip, keepaspectratio]{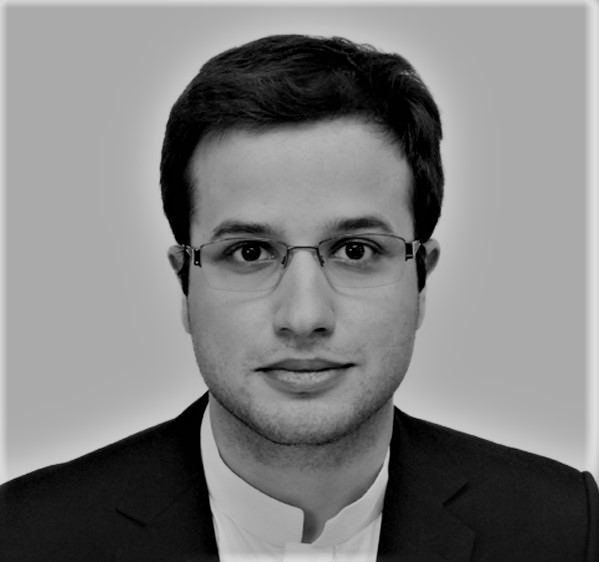}}]{Nouman Khan} (Member, IEEE) is a Ph.D candidate in the department of Electrical Engineering and Computer Science (EECS) at the University of Michigan, Ann Arbor, MI, USA. He received the B.S. degree in Electronic Engineering from the GIK Institute of Engineering Sciences and Technology, Topi, KPK, Pakistan, in 2014 and the M.S. degree in Electrical and Computer Engineering from the University of Michigan, Ann Arbor, MI, USA in 2019. His research interests include stochastic systems and their analysis and control.
\end{IEEEbiography}

\begin{IEEEbiography}[{\includegraphics[width=1in, height=1.25in, clip, keepaspectratio]{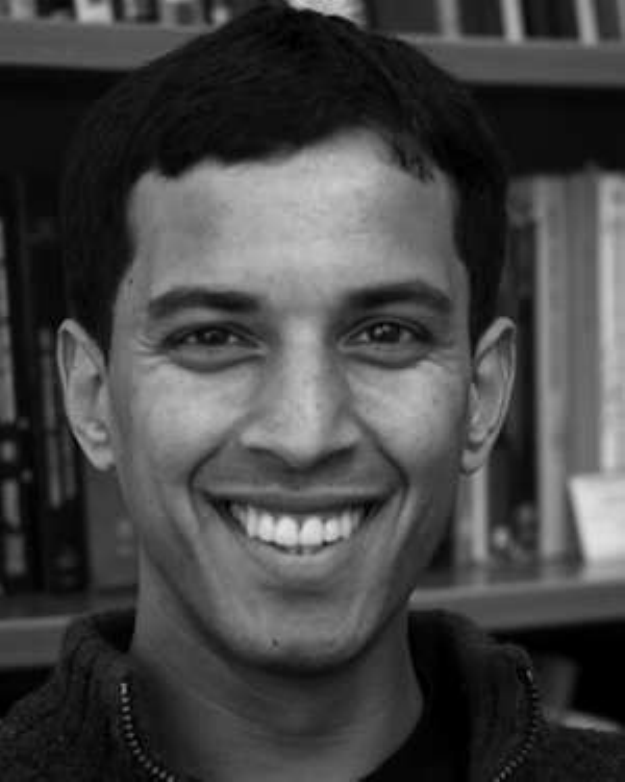}}]{Vijay Subramanian} (Senior Member, IEEE) received the Ph.D. degree in electrical engineering from the University of Illinois at Urbana-Champaign, Champaign, IL, USA, in 1999. He was a Researcher with Motorola Inc., and also with Hamilton Institute, Maynooth, Ireland, for a few years following which he was a Research Faculty with the Electrical Engineering and Computer Science (EECS) Department, Northwestern University, Evanston, IL, USA. In 2014, he joined the University of Michigan, Ann Arbor, MI, USA, where he is currently an Associate Professor with the EECS Department. His research interests are in stochastic analysis, random graphs, game theory, and mechanism design with applications to social, as well as economic and technological networks. 
\end{IEEEbiography}
\end{document}